\theoremstyle{plain}
\newtheorem{theorem}{Theorem}[section]
\newtheorem{cor}[theorem]{Corollary}
\newtheorem{prop}[theorem]{Proposition}
\newtheorem{lemma}[theorem]{Lemma}
\theoremstyle{definition}
\newtheorem{example}[theorem]{Example}
\newtheorem{question}[theorem]{Question}
\newtheorem{rem}[theorem]{Remark}
\newtheorem{remark}[theorem]{Remark}
\newtheorem{definition}[theorem]{Definition}
\newcommand{\R}{\mathbb{R}}
\newcommand{\N}{\mathbb{N}}
\newcommand{\Lin}{\mathcal{L}}
\newcommand{\eps}{\varepsilon}
\newcommand{\lam}{\lambda}
\DeclareMathOperator{\sgn}{sgn}
\DeclareMathOperator{\spann}{span}
\DeclareMathOperator{\NA}{NA}
\DeclareMathOperator{\BS}{B\check{S}}
\DeclareMathOperator{\BSa}{B\check{S}_a}
\renewcommand{\subset}{\subseteq}
\title[Norm attaining operators and the strong orthogonality]
{On a set of norm attaining operators and the strong Birkhoff-James orthogonality}
\author[G.~Choi]{Geunsu Choi}
\address[G.~Choi]{Department of Mathematics Education, Dongguk University, Seoul 04620, Republic of Korea \newline
\href{http://orcid.org/0000-0002-4321-1524}{ORCID: \texttt{0000-0002-4321-1524}}}
\email{\texttt{chlrmstn90@gmail.com}}
\author[M. Jung]{Mingu Jung}
\address[M. Jung]{School of Mathematics, Korea Institute for Advanced Study, 02455 Seoul, Republic of Korea \newline
\href{http://orcid.org/0000-0003-2240-2855}{ORCID: \texttt{0000-0003-2240-2855} }}
\email{\texttt{jmingoo@kias.re.kr}}
\author[S.~K.~Kim]{Sun Kwang Kim}
\address[S.~K.~Kim]{Department of Mathematics, Chungbuk National University, Cheongju, Chungbuk 28644, Republic of Korea\newline
	\href{http://orcid.org/0000-0002-9402-2002}{ORCID: \texttt{0000-0002-9402-2002}  }}
\email{\texttt{skk@chungbuk.ac.kr}}
\thanks{}
\keywords{Banach space, Norm attaining operator, Birkhoff-James orthogonality, Bhatia-\v{S}emrl property}
\subjclass[2010]{Primary: 46B04; Secondary: 46B20, 46B25}                  
\begin{document}

\begin{abstract}
Continuing the study of recent results on the Birkhoff-James orthogonality and the norm attainment of operators, we introduce a property namely the adjusted Bhatia-\v{S}emrl property for operators which is weaker than the Bhatia-\v{S}emrl property. The set of operators with the adjusted Bhatia-\v{S}emrl property is contained in the set of norm attaining ones as it was in the case of the Bhatia-\v{S}emrl property. It is known that the set of operators with the Bhatia-\v{S}emrl property is norm-dense if the domain space $X$ of the operators has the Radon-Nikod\'ym property like finite dimensional spaces, but it is not norm-dense for some classical spaces such as $c_0$, $L_1[0,1]$ and $C[0,1]$. In contrast with the Bhatia-\v{S}emrl property, we show that the set of operators with the adjusted Bhatia-\v{S}emrl property is norm-dense when the domain space is $c_0$ or $L_1[0,1]$. Moreover, we show that the set of functionals having the adjusted Bhatia-\v{S}emrl property on $C[0,1]$ is not norm-dense but such a set is weak-$*$-dense in $C(K)^*$ for any compact Hausdorff $K$.
\end{abstract}

\maketitle

\section{Introduction \& Preliminaries}

The famous Bishop-Phelps theorem states that the set of norm attaining functionals on a Banach space is norm-dense in its dual space \cite{BP}. This allowed many authors to study the set of norm attaining operators, and especially J. Lindenstrauss \cite{L} first showed that there is no vector-valued version of the Bishop-Phelps theorem. However, he also found many pairs of Banach spaces such that the denseness holds, and afterwards it was also discovered that the denseness holds for many pairs of classical spaces like $(L_p[0,1],L_q[0,1])$ and $(C[0,1],L_p[0,1])$ where $p,q\in \mathbb{N}$ (see \cite{B,FP,I,S}).

In 1999, Bhatia and \v{S}emrl \cite{BS} showed that an operator $T$ on a finite dimensional complex Hilbert space is Birkhoff-James orthogonal to another operator $S$ if and only if there exists a norm attaining point $x$ of $T$ such that $Tx$ is Birkhoff-James orthogonal to $Sx$. The Birkhoff-James orthogonality is introduced by G. Birkhoff in \cite{Bir} to consider the concept of orthogonality on linear metric spaces. In general, the characterization of Bhatia and \v{S}emrl is not true for operators between Banach spaces \cite{LS}. Nevertheless, when the domain space is finite dimensional or has some geometric property such as property quasi-$\alpha$ \cite{CK} or the Radon-Nikod\'ym property \cite{K}, the set of operators for which the characterization holds is a norm-dense subset of the norm attaining operators similarly to the case of the aforementioned norm attaining operator theory. However, on some classical spaces with no (or few) extreme points such as $c_0$, $L_1[0,1]$ and $C[0,1]$, the characterization holds for only a few operators, implying that the denseness of such a set of operators does not hold (see \cite{CK,K,KL,PSJ}). This is the main difference between this study and the classical norm attaining operator theory. The main aim of the present paper is to take into account a new class of norm attaining operators which contains (properly) the set of operators with the characterization of Bhatia and \v{S}emrl, such that the denseness of such a set holds for some classical spaces.

For more details, we restart the introduction with the notions. Throughout the paper, $X$ and $Y$ are real Banach spaces. By the sets $S_X$ and $B_X$ we mean the unit sphere and the closed unit ball of a Banach space $X$, respectively, and $X^*$ stands for the topological dual space of $X$. We denote by $\Lin(X,Y)$ the space of all bounded linear operators from $X$ into $Y$. An operator $T \in \Lin(X,Y)$ is said to \emph{attain its norm} at $x_0 \in S_X$ if $\|Tx_0\|=\|T\|=\sup \{ \|Tx\| : x\in B_X \}$. The set of norm attaining operators is denoted by $\NA(X,Y)$, and we define the set of norm attaining points of an operator $T \in \Lin(X,Y)$ by $M_T := \{x \in S_X : \|Tx\|=\|T\|\}$.

We say a vector $x \in X$ is \emph{orthogonal to $y \in X$ in the sense of Birkhoff-James} or \emph{Birkhoff-James orthogonal} to $y$ if $\|x\| \leq \|x+ \lam y \|$ for any scalar $\lam$, and it is denoted by $x \perp_B y$. Motivated by the aforementioned result of Bhatia and \v{S}emrl \cite{BS}, the authors introduced in \cite{PSJ} the \emph{Bhatia-\v{S}emrl property} (in short, $\BS$ property) for a norm attaining operator $T \in \Lin(X,Y)$ that for any $S \in \Lin(X,Y)$ with $T \perp_B S$, there exists $x_0 \in M_T$ such that $Tx_0 \perp_B Sx_0$. The set of norm attaining operators between $X$ and $Y$ with the $\BS$ property is denoted by $\BS(X,Y)$.

As it is mentioned in the beginning, the set $\NA(X,Y)$ is norm-dense in $\Lin(X,Y)$ for most of classical Banach spaces $X$ and $Y$, and by definition the set $\BS(X,Y)$ is a subset of $\NA(X,Y)$. Hence, it is quite natural to ask whether the set $\BS(X,Y)$ is norm dense for the same $X$ and $Y$. As we have mentioned, the answer is known to be affirmative under some geometric conditions on $X$ like property quasi-$\alpha$ and Radon-Nikod\'ym property. However, for classical spaces $X=L_1[0,1], C[0,1]$ or $c_0$, it fails \cite{CK,K,KL}. Motivated by these observations, we are willing to consider a suitable subset of $\NA(X,Y)$ which contains $\BS(X,Y)$ and is norm-dense in $\mathcal{L} (X,Y)$ even when $\BS(X,Y)$ is not norm-dense.

In order to define such a suitable subset, we recall the concept of the strong orthogonality of vectors which was recently  taken into consideration in \cite{PSJ,SPJ} with a notion of the Birkhoff-James orthogonality. We say that a vector $x \in X$ is \emph{strongly orthogonal to $y \in X$ in the sense of Birkhoff-James} if $\|x\| < \|x+ \lam y \|$ for any $\lam \neq 0$, and it is denoted by $x \perp_S y$. By the definition, we see that if $x \perp_S y$, then $x \perp_B y$. This concept is used to characterize the strict convexity of a Banach space which we present below.

\begin{rem}[\mbox{\cite[Theorem 2.4]{SPJ}}]\label{rem:str-cvx}
Let $X$ be a Banach space. Then, $x \perp_S y$ is equivalent to $x \perp_B y$ for every $x,y \in X \setminus \{0\}$ if and only if $X$ is strictly convex.
\end{rem}

Using the concept of strong orthogonality, we define the main object of this paper.

\begin{definition}
A bounded linear operator $T \in \Lin(X,Y)$ is said to have the \emph{adjusted Bhatia-\v{S}emrl property} \textup{(}in short, \emph{adjusted $\BS$ property}\textup{)} if it attains its norm and for any $S \in \Lin(X,Y)$ with $T \perp_S S$, there exists $x_0 \in M_T$ such that $Tx_0 \perp_B Sx_0$. The set of operators with the adjusted $\BS$ property is denoted by $T \in \BSa(X,Y)$.
\end{definition}

It is clear that
$$
\BS(X,Y) \subseteq \BSa(X,Y) \subseteq \NA(X,Y)
$$
for all Banach spaces $X$ and $Y$. We will see later that both inclusions are proper in many cases. In \cite{KL}, the authors characterized the set of operators with the $\BS$ property in terms of other known materials, and we have analogies. All of them are easy consequence of the above inclusion or the own proofs of \cite[Corollary 2.2, Corollary 2.3, Proposition 2.4, Proposition 2.5]{KL}, so we omit their proofs.

\begin{prop}
Let $X$ and $Y$ be Banach spaces.
\begin{enumerate}
\setlength\itemsep{0.3em}
\item[\textup{(a)}] $X$ is reflexive if and only if $\BSa(X,\R) =X^*$. 
\item[\textup{(b)}] $T \in \BSa(X,Y)$ if and only if $T$ attains its norm and for any $S \in \Lin(X,Y)$ with $T \perp_S S$, there exist $x_0 \in S_X$ and $y_0^* \in S_{Y^*}$ such that $y_0^*(Tx_0)=\|T\|$ and $y_0^*(Sx_0)=0$.
\item[\textup{(c)}] If $\BSa(X,Y) = \Lin(X,Y)$ for every reflexive $X$, then $Y$ is one-dimensional.
\item[\textup{(d)}] $\BSa(X,Y)=\Lin(X,Y)$ for every $Y$ if and only if $X$ is one-dimensional.
\end{enumerate}
\end{prop}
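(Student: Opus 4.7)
The plan is to combine the chain $\BS(X,Y)\subseteq \BSa(X,Y)\subseteq \NA(X,Y)$ with direct adaptations of the arguments of \cite[Corollary 2.2, Corollary 2.3, Proposition 2.4, Proposition 2.5]{KL}.

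For (a), the inclusion $\BSa(X,\R)\subseteq \NA(X,\R)$ shows that $\BSa(X,\R) = X^*$ forces every functional on $X$ to attain its norm, whence $X$ is reflexive by James' theorem. Conversely, if $X$ is reflexive then every $T\in X^*$ attains its norm; for any $S\in X^*$ with $T\perp_S S$ we have in particular $T\perp_B S$, and a standard weak-compactness argument (extracting a weak limit of a maximizing sequence realizing $T\perp_B S$) produces $x_0 \in M_T$ with $S(x_0)=0$, i.e., $T(x_0)\perp_B S(x_0)$ in $\R$.

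For (b), I would invoke the classical James--Hahn--Banach characterization: $y\perp_B z$ in $Y$ iff there exists $y^*\in S_{Y^*}$ with $y^*(y)=\|y\|$ and $y^*(z)=0$. Substituting $y=Tx_0$, $z=Sx_0$ gives the forward implication directly. Conversely, existence of $y_0^* \in S_{Y^*}$ with $y_0^*(Tx_0)=\|T\|$ forces $\|Tx_0\|=\|T\|$, so $x_0\in M_T$, and $y_0^*$ itself witnesses $Tx_0\perp_B Sx_0$.

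For (c) and (d), I would import the counterexamples constructed in the proofs of \cite[Proposition 2.4, Proposition 2.5]{KL}. Those arguments exhibit, under the hypothesis $\dim Y\ge 2$ (resp.\ $\dim X\ge 2$), a concrete pair $(T,S)$ with $T\perp_B S$ and no $x_0\in M_T$ satisfying $Tx_0\perp_B Sx_0$, together with the appropriate auxiliary space (and for (d), part (a) first supplies the reflexivity of $X$, reducing to the setting of those constructions). The main obstacle, and the step I expect to dominate the work, is to verify that the particular $S$ used in each construction actually realizes the strictly stronger relation $T\perp_S S$ rather than merely $T\perp_B S$; this upgrade is necessary precisely because $\BSa \supseteq \BS$, so a counterexample for $\BS$ is not automatically a counterexample for $\BSa$. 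Since $S$ in \cite{KL} is built from a rank-one or otherwise transversal ingredient, a direct inspection of the convex function $\lambda \mapsto \|T+\lambda S\|$ near $\lambda=0$ should yield the required strict inequality $\|T\|<\|T+\lambda S\|$ for $\lambda\ne 0$, closing the proof.
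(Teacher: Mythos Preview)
Your proposal is correct and matches the paper's approach, which omits the proof entirely and simply says the items follow from the inclusion chain $\BS(X,Y)\subseteq \BSa(X,Y)\subseteq \NA(X,Y)$ together with the proofs of the corresponding results in \cite{KL}. You have correctly identified the only genuinely nontrivial point, namely that for (c) and (d) the counterexamples in \cite{KL} must be upgraded from $T\perp_B S$ to $T\perp_S S$; this is exactly what the paper means by ``the own proofs'' rather than ``the statements''. One small simplification: for the forward direction of (a) you can bypass the weak-compactness argument entirely by invoking \cite[Corollary 2.2]{KL} (reflexive $\Rightarrow \BS(X,\R)=X^*$) and then the inclusion $\BS\subseteq\BSa$, which is how the paper frames it.
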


Our first main result concerns the inclusions for specific spaces, and they can be described as follows.

\begin{theorem}\label{theorem:set}
Let $X$ be a Banach space.
\begin{enumerate}
\setlength\itemsep{0.3em}
\item[\textup{(a)}] If $X=c_0$, then $\BS(X,\R) \subsetneqq \BSa(X,\R) = \NA(X,\R)$.
\item[\textup{(b)}] If $X=L_1[0,1]$ or $C[0,1]$, then $\BS(X,\R) \subsetneqq \BSa(X,\R) \subsetneqq \NA(X,\R)$.
\item[\textup{(c)}] If $X$ is non-reflexive and $X^*$ is strictly convex, then $\BS(X,\R) = \BSa(X,\R) \subsetneqq \Lin(X,\R)$.
\end{enumerate}
\end{theorem}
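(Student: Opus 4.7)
My plan is to treat the three parts separately. Part (c) is essentially immediate: if $X^*$ is strictly convex, then applying Remark \ref{rem:str-cvx} to $X^*$ gives $T \perp_B S \iff T \perp_S S$ for all nonzero $T, S \in X^*$, so the defining conditions of $\BS(X, \R)$ and $\BSa(X, \R)$ coincide and the two sets are equal. James' theorem supplies some $T \in X^* \setminus \NA(X, \R)$ from the non-reflexivity of $X$, and $\BSa(X, \R) \subseteq \NA(X, \R) \subsetneq \Lin(X, \R)$ then delivers the strict inclusion.

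For (a), I would first derive, for $T = (a_n) \in \ell_1$ with finite support $F$ (equivalent to $T \in \NA(c_0, \R)$) and $S = (b_n) \in \ell_1$, the local expansion
\[
\|T + \lambda S\|_1 - \|T\|_1 = \lambda \alpha + |\lambda| \beta, \qquad \alpha := \sum_{n \in F} \sgn(a_n) b_n, \quad \beta := \sum_{n \notin F} |b_n|,
\]
valid for all sufficiently small $\lambda$. Convexity of $\lambda \mapsto \|T + \lambda S\|_1$ then characterizes $T \perp_B S$ as $|\alpha| \le \beta$ and $T \perp_S S$ as $|\alpha| < \beta$. To prove $\BSa(c_0, \R) = \NA(c_0, \R)$: given $T \perp_S S$, choose a finite $F' \subseteq \N \setminus F$ with $\sum_{n \in F'} |b_n| \ge |\alpha|$, set $x_0(n) = \sgn(a_n)$ on $F$, $x_0(n) = 0$ outside $F \cup F'$, and pick the values on $F'$ (with $|x_0(n)| \le 1$) realizing $\sum_{n \in F'} b_n x_0(n) = -\alpha$; then $x_0 \in M_T$ and $S(x_0) = 0$. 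To witness $\BS(c_0, \R) \subsetneq \BSa(c_0, \R)$ I would take $T = e_1^*$ and $S = -\tfrac12 e_1^* + \sum_{n \ge 2} 2^{-n} e_n^*$: one has $|\alpha| = \beta = \tfrac12$, so $T \perp_B S$ but not $T \perp_S S$, while the equation $\sum_{n \ge 2} 2^{-n} x_0(n) = \tfrac12$ cannot hold under the constraint $x_0(n) \to 0$.

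For (b) I would repeat this program in each space. For $X = L_1[0, 1]$, writing $E := \{|T| = \|T\|_\infty\}$ and $\sigma := \sgn T$ on $E$, the analogous expansion (in the generic case where $|T|$ is bounded away from $\|T\|_\infty$ on $E^c$) characterizes $T \perp_S S$ as $\operatorname{essinf}_E(\sigma S) < 0 < \operatorname{esssup}_E(\sigma S)$, and a convex-combination/intermediate-value argument delivers $f \in M_T$ with $\int fS = 0$. The witness $T = \mathbf{1}_{[0, 1/2]}$ with $S(t) = -t$ settles $\BS \subsetneq \BSa$. For the more delicate $\BSa \subsetneq \NA$ I would take $T(t) = \mathbf{1}_{[0, 1/2]}(t) + (1 - (t-1/2))\mathbf{1}_{(1/2, 1]}(t)$ together with $S = -\mathbf{1}_{[0, 1/2]} + \mathbf{1}_{(1/2, 1]}$: the right-limit $T(1/2^+) = 1$ forces $\|T + \lambda S\|_\infty > 1$ for all $\lambda \neq 0$, while $M_T$ consists solely of densities supported in $[0, 1/2]$, on which $\int fS = -1$ uniformly. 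For $X = C[0, 1]$, analogous examples in $M[0, 1]$ via the Jordan decomposition $\nu = c \delta_{1/2} + \tilde\nu$ (with $\tilde\nu \perp \delta_{1/2}$) produce $\delta_{1/2} \in \BSa \setminus \BS$ (using $\nu = \delta_{1/2} + dm$, with an intermediate-value argument relying on $|\tilde\nu|(\{1/2\}) = 0$) and Lebesgue $dm \in \NA \setminus \BSa$ (using $S = \delta_{t_0}$, exploiting $M_{dm} = \{\mathbf{1}\}$ and mutual singularity). The main obstacle I foresee is this last half of (b): the witnesses must couple norm attainment with a rigidity of $M_T$ that prevents killing $S$, while strong orthogonality is supplied by perturbations through the complement of $E$ (in $L_1$) or through a singular part (in $C[0, 1]$), and the constructions for the two spaces are structurally different.
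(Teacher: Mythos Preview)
Your proposal is correct, and for part (c) and for the equality $\BSa(c_0,\R)=\NA(c_0,\R)$ in (a) it matches the paper's argument essentially line for line (your $|\alpha|<\beta$ inequality is exactly the claim \eqref{eq:ell1} in Theorem~\ref{thm:ell1predual}).

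Where you diverge is in how the strict inclusions are obtained. The paper does not argue Theorem~\ref{theorem:set} directly; instead it assembles it from much stronger results proved later: for $\BS\subsetneqq\BSa$ it simply cites the known facts $\BS(c_0,\R)=\{0\}$ and $\BS(L_1[0,1],\R)=\{0\}$ (and the inclusion $\BS(C[0,1],\R)\subseteq\{T:M_T=\{\pm\chi_{[0,1]}\}\}\cup\{0\}$), while for $\BSa\subsetneqq\NA$ it invokes the density theorems (Theorem~\ref{theorem:L_1-RNP}, Propositions~\ref{example1}--\ref{example2}) which produce norm-dense sets of norm attaining functionals lacking the adjusted $\BS$ property. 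Your route is instead to exhibit single explicit witnesses: $e_1^*$ in $c_0$, $\mathbf 1_{[0,1/2]}$ and the ``ramp'' $T=\mathbf 1_{[0,1/2]}+(3/2-t)\mathbf 1_{(1/2,1]}$ in $L_1[0,1]$, and $\delta_{1/2}$ and Lebesgue measure $dm$ in $C[0,1]$. This is more elementary and fully self-contained for Theorem~\ref{theorem:set} itself---in particular your $dm\notin\BSa(C[0,1],\R)$ via $S=\delta_{t_0}$ (mutual singularity gives $\|dm+\lambda\delta_{t_0}\|=1+|\lambda|$, while $M_{dm}=\{\pm\mathbf 1\}$) is considerably shorter than the paper's route through Propositions~\ref{example1}--\ref{example2}. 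The trade-off is that the paper's heavier machinery also delivers Theorem~\ref{theorem:denseness}, which your pointwise witnesses do not.
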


Furthermore, we show that even the denseness holds for the spaces $c_0$ and $L_1[0,1]$, whereas there are still many norm attaining operators without the adjusted $\BS$ property. We summarize the main denseness results of the present paper below which distinguishes the adjusted $\BS$ property from the original $\BS$ property. Here, we present only the cases that the range space is the scalar field for simplicity and we refer to the next section for more generalized results on range spaces.

\begin{theorem}\label{theorem:denseness}
Let $X$ be a Banach space.
\begin{enumerate}
\setlength\itemsep{0.3em}
\item[\textup{(a)}] If $X=c_0$, then $\BSa(X,\R)$ is norm-dense in $\Lin(X,\R)$.
\item[\textup{(b)}] If $X=L_1[0,1]$, then $\BSa(X,\R)$ is norm-dense in $\Lin(X,\R)$.
\item[\textup{(c)}] If $X=L_1[0,1]$, then the set of norm attaining operators without the adjusted $\BS$ property is norm-dense in $\Lin(X,\R)$.
\item[\textup{(d)}] If $X=C[0,1]$, then $\BSa(X,\R)$ is weak-$*$-dense in $\Lin(X,\R)$.
\item[\textup{(e)}] If $X=C[0,1]$, then the set of norm attaining operators without the adjusted $\BS$ property is norm-dense in $\Lin(X,\R)$.
\end{enumerate}
\end{theorem}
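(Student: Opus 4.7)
Part (a) is immediate from Theorem~\ref{theorem:set}(a): since $\BSa(c_0,\R)=\NA(c_0,\R)$ and the norm-attaining functionals on $c_0$ are the finitely supported elements of $\ell_1\cong c_0^*$, which are norm-dense in $\ell_1$, the density claim follows. For (b), given $g\in L_\infty[0,1]$ and $\delta>0$, I would \emph{flatten the peak} by defining $g':=\|g\|_\infty\sgn g$ on $A_\delta:=\{|g|\ge\|g\|_\infty-\delta\}$ and $g':=g$ elsewhere. Then $\|g-g'\|_\infty\le\delta$, the set $A:=\{|g'|=\|g'\|_\infty\}$ has positive measure, and $|g'|\le\|g'\|_\infty-\delta$ essentially on $A^c$. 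To show $g'\in\BSa(L_1,\R)$, I would take $h\in L_\infty$ with $g'\perp_S h$ and set $H:=h\sgn g'$; the uniform gap $\delta$ outside $A$ ensures that for small $\lambda$ the essential supremum of $|g'+\lambda h|$ is realised on $A$, where $|g'+\lambda h|=\|g'\|_\infty+\lambda H$. Hence $g'\perp_S h$ forces the essential supremum of $H$ on $A$ to be strictly positive and its essential infimum strictly negative, and a convex-combination argument produces a probability density $u$ supported on $A$ with $\int uH=0$; then $f:=\epsilon u\sgn g'$ for an appropriate $\epsilon\in\{\pm 1\}$ lies in $M_{g'}$ and satisfies $\int hf=0$.

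For (c) and (e) the strategy is dual: \emph{build an accumulating shoulder}. Given $g$ and $\eta>0$, I would modify $g$ on a small set $E$ where $g$ is within $\eta$ of its essential supremum, partitioning $E=A\sqcup\bigsqcup_n B_n$ into pieces of positive measure and setting $g'=\|g\|_\infty\sgn g$ on $A$, $g'=(1-1/n)\|g\|_\infty\sgn g$ on $B_n$, and $g'=g$ elsewhere. Then $\{|g'|=\|g'\|_\infty\}=A$, while $|g'|$ accumulates at $\|g'\|_\infty$ along the $B_n$ without attaining it. Testing with $h:=\chi_A\sgn g'-c\sum_n\chi_{B_n}\sgn g'$ for a small $c>0$, a direct computation gives $\|g'+\lambda h\|_\infty>\|g'\|_\infty$ for every $\lambda\neq 0$ (the growth comes from $A$ when $\lambda>0$ and from the shoulder when $\lambda<0$), hence $g'\perp_S h$. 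Yet $h\sgn g'\equiv 1$ on $A$ prevents any $f\in M_{g'}$ from realising $\int hf=0$, so $g'\in\NA\setminus\BSa$. Part (e) would follow by the same recipe in $C[0,1]$, with characteristic functions replaced by continuous bumps of uniform norm one furnished by Urysohn's lemma on the compact metric space $[0,1]$.

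For (d), since finitely supported signed measures $\mu'=\sum_{i=1}^n a_i\delta_{t_i}$ (with distinct $t_i$) form a weak-$*$-dense subspace of $C[0,1]^*$, it suffices to show each such $\mu'$ lies in $\BSa(C[0,1],\R)$. Here $M_{\mu'}$ consists of $f\in B_{C[0,1]}$ with $f(t_i)=\epsilon\sgn a_i$ for some $\epsilon\in\{\pm 1\}$. Given $\nu$ with $\mu'\perp_S\nu$, the set $\{\int f\,d\nu:f\in M_{\mu'}\}$ is a union of two closed intervals (one per sign $\epsilon$), whose endpoints can be computed by pushing $f$ to $\pm 1$ on the arcs complementary to $\{t_1,\dots,t_n\}$ with thin continuous transitions near each $t_i$. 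Translating $\mu'\perp_S\nu$ via the first-order expansion of $\|\mu'+\lambda\nu\|$ places $0$ inside this union, yielding the required $f$ with $\int f\,d\nu=0$.

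The main obstacle, common to (b)--(e), will be to convert the strictness in $\perp_S$ (as opposed to only $\perp_B$) into a \emph{strict} sign-variation of the relevant auxiliary quantity on the maximum level set, so that $0$ lies in the \emph{interior} of the reachable range of $\int hf$ (respectively $\int f\,d\nu$) and an admissible $f$ truly exists. In (d) a further delicate point is that $\nu$ may place mass at or near the points $t_i$, so the construction of the extremal continuous $f$ must accommodate such behaviour.
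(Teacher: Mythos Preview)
Your arguments for (a), (b), and (d) are correct and follow essentially the same lines as the paper (for (d) the paper proves the key strict inequality $\bigl|\sum_k\sgn(a_k)\,\nu(\{t_k\})\bigr|<|\nu|\bigl([0,1]\setminus\{t_k\}\bigr)$ and then builds a one-parameter family $f_\alpha\in M_{\mu'}$ with $\nu(f_\alpha)$ continuous in $\alpha$, which is exactly your ``reachable range'' picture made precise). For (c) your staircase idea is sound and in fact simpler than the paper's construction, which uses a continuous ramp together with a fat Cantor--type set because it treats an arbitrary range space with the Radon--Nikod\'ym property rather than just $\R$. However, as written your $g'$ does not approximate $g$: on $B_1$ you set $g'=0$ while $|g|>\|g\|_\infty-\eta$ there, so $\|g'-g\|_\infty\ge\|g\|_\infty-\eta$. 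Replacing $(1-1/n)$ by, say, $(1-\eta\,2^{-n})$ repairs this, and the rest of your argument (strict orthogonality from $A$ for $\lambda>0$ and from the shoulder for $\lambda<0$; every $f\in M_{g'}$ is supported in $A$ with $\sgn f=\sgn g'$, forcing $\int hf=\pm 1$) goes through unchanged.

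The genuine gap is (e). Your recipe from (c) lives in $L_\infty[0,1]=L_1[0,1]^*$, where the norm is an essential supremum and it makes sense to speak of a maximum level set with shoulders accumulating at it. But $C[0,1]^*=\mathcal{M}[0,1]$ carries the total-variation norm, and there is no analogous peak--shoulder structure for a measure: if you transport your $g'$ and $h$ to absolutely continuous measures $g'\,d\sigma$ and $h\,d\sigma$, the relevant quantity becomes $\|g'+\lambda h\|_{L_1}$ rather than $\|g'+\lambda h\|_{L_\infty}$, and your strict-orthogonality computation collapses entirely. The phrase ``continuous bumps via Urysohn'' suggests you are thinking of the norm-attaining points $f\in M_{\mu'}\subset C[0,1]$, but in (c) the characteristic functions were used to build the \emph{operators} $g'$ and $h$, not the attaining point, so the analogy is misplaced. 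The paper's route is structurally different: it isolates two sufficient conditions on a norm-attaining measure $\mu$ (either a nonconstant continuous monotone distribution on some subinterval together with a disjoint $|\mu|$-null interval, or infinitely many atoms together with such a null interval) under which an explicit test measure $\nu$ with $\mu\perp_S\nu$ but $\nu(f)\ne 0$ for every $f\in M_\mu$ can be written down, and then observes that measures satisfying one of these conditions are norm-dense among the norm-attaining ones.
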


\section{Main results}

In this section, we provide results as mentioned in Theorems \ref{theorem:set} and \ref{theorem:denseness} of the previous section. We begin with an immediate result that the two sets of the $\BS$ properties are the same. We omit its proof since it follows directly from Remark \ref{rem:str-cvx} and definitions of the $\BS$ property and the adjusted $\BS$ property.
\begin{prop}\label{prop:str-cvx}
Let $X$ and $Y$ be Banach spaces. If $\Lin(X,Y)$ is strictly convex, then $\BS (X, Y) = \BSa (X, Y)$. 
\end{prop}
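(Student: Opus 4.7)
The plan is to observe that the non-trivial inclusion is $\BSa(X,Y)\subseteq\BS(X,Y)$, as $\BS(X,Y)\subseteq\BSa(X,Y)$ holds for arbitrary Banach spaces (simply because strong orthogonality implies Birkhoff-James orthogonality, so the hypothesis in $\BSa$ is a priori stronger). So fix $T\in\BSa(X,Y)$; by definition $T$ already attains its norm, and I only need to verify the defining condition for $\BS$: for every $S\in\Lin(X,Y)$ with $T\perp_B S$, produce $x_0\in M_T$ with $Tx_0\perp_B Sx_0$.

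Next, I would dispose of the degenerate cases. If $S=0$ the conclusion is immediate because $Tx_0\perp_B 0$ for every $x_0\in M_T$. If $T=0$ then $M_T=S_X$, and again $Tx_0=0\perp_B Sx_0$ for any choice of $x_0$, so $T\in\BS(X,Y)$.

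The core of the argument is then the remaining case $T,S\in\Lin(X,Y)\setminus\{0\}$ with $T\perp_B S$. Here I invoke Remark \ref{rem:str-cvx} applied to the Banach space $\Lin(X,Y)$: since $\Lin(X,Y)$ is strictly convex, Birkhoff-James orthogonality and strong Birkhoff-James orthogonality coincide on $\Lin(X,Y)\setminus\{0\}$. Therefore $T\perp_B S$ upgrades automatically to $T\perp_S S$. Because $T\in\BSa(X,Y)$, this stronger hypothesis hands us an $x_0\in M_T$ with $Tx_0\perp_B Sx_0$, which is precisely what the definition of $\BS(X,Y)$ demands.

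I do not anticipate a genuine obstacle: the statement is essentially a tautological consequence of Remark \ref{rem:str-cvx} once one splits off the zero cases, which is presumably why the authors omit the proof. The only minor care point is to remember to apply Remark \ref{rem:str-cvx} in the operator space $\Lin(X,Y)$ (equipped with its operator norm) rather than in $X$ or $Y$, and to keep track of the fact that the remark is stated for nonzero vectors, hence the need to handle $T=0$ and $S=0$ separately.
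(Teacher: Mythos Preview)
Your proposal is correct and is exactly the argument the paper has in mind: the authors omit the proof precisely because it follows immediately from Remark~\ref{rem:str-cvx} applied to $\Lin(X,Y)$ together with the definitions, which is what you spell out. Your explicit treatment of the cases $T=0$ and $S=0$ is a sensible bookkeeping step to accommodate the nonzero hypothesis in Remark~\ref{rem:str-cvx}, but otherwise there is no deviation from the paper's intended reasoning.
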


\begin{cor}
If $X$ is a separable Banach space, then there is an equivalent renorming $\tilde{X}$ of $X$ such that $\BS(\tilde{X},\R)=\BSa(\tilde{X},\R)$.
\end{cor}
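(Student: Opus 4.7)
The plan is to reduce the corollary directly to Proposition \ref{prop:str-cvx} with $Y=\mathbb{R}$, since in that case $\Lin(X,\mathbb{R})=X^{*}$, so it suffices to produce an equivalent norm on $X$ whose dual norm is strictly convex. This is a well-known renorming result, so the corollary is essentially an immediate consequence of this renorming plus the proposition.

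Concretely, I would proceed as follows. First, I would invoke the classical theorem (due to Klee, and also recorded in the Day--Klee renorming results; see e.g.\ the monograph of Deville--Godefroy--Zizler \emph{Smoothness and Renormings in Banach Spaces}) asserting that every separable Banach space $X$ admits an equivalent norm $|\!|\!| \cdot |\!|\!|$ such that the corresponding dual norm $|\!|\!| \cdot |\!|\!|^{*}$ on $X^{*}$ is strictly convex. Denote $\tilde{X}=(X,|\!|\!|\cdot|\!|\!|)$, so that $\tilde{X}^{*}=(X^{*},|\!|\!|\cdot|\!|\!|^{*})$ is strictly convex.

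Next, I would observe that $\Lin(\tilde{X},\mathbb{R})=\tilde{X}^{*}$ isometrically, hence $\Lin(\tilde{X},\mathbb{R})$ is strictly convex. By Proposition \ref{prop:str-cvx} applied with $Y=\mathbb{R}$, it follows that $\BS(\tilde{X},\mathbb{R})=\BSa(\tilde{X},\mathbb{R})$, which is the desired conclusion.

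The only nontrivial ingredient is the renorming theorem; once it is in hand, Proposition \ref{prop:str-cvx} does all the work. The main point to be careful about is that we only need strict convexity of the \emph{dual} norm on $X^{*}$ (not of the original norm on $X$), and this weaker property is exactly what can be achieved for all separable Banach spaces, even when $X^{*}$ itself is non-separable.
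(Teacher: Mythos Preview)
Your proposal is correct and follows essentially the same approach as the paper: invoke the classical renorming theorem (the paper cites \cite[II. Theorem 2.6]{DGZ}) guaranteeing an equivalent norm on a separable $X$ with strictly convex dual, and then apply Proposition \ref{prop:str-cvx} with $Y=\mathbb{R}$.
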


\begin{proof}
According to \cite[II. Theorem 2.6]{DGZ}, there is an equivalent norm $| \cdot |$ on $X$ such that $(X, |\cdot|)^*$ is strictly convex. Thus the conclusion follows by Proposition \ref{prop:str-cvx}.
\end{proof}

From the well known characterization of reflexivity by R.C. James \cite{J2} that $X$ is reflexive if and only if $\NA(X,\R)=\Lin(X,\R)$, we have an example that $\BSa(X,\R) \subsetneqq \Lin(X,\R)$.
\begin{example}
For a non-reflexive Banach space $X$ such that $X^*$ is strictly convex such as a suitable renorming of $c_0$, we have
$$
\BS(X,\R) = \BSa(X,\R) \subset \NA(X,\R)\subsetneqq \Lin(X,\R).
$$
\end{example}

The converse of Proposition \ref{prop:str-cvx} is not true in the case that $X$ and $Y$ are a reflexive space and a strictly convex space respectively such that $\Lin (X, Y) = \mathcal{K} (X, Y)$ where $\mathcal{K} (X, Y)$ is the space of compact operators in $\Lin (X, Y)$. In order to see this, we need the following modification of \cite[Theorem 2.1]{GSP}. 
\begin{theorem}\label{rem:str-cvxop}
Let $X$ be a reflexive Banach space and $Y$ be a strictly convex Banach space. If $T,S\in \mathcal{K} (X, Y)$ satisfy $T\perp_B S$, then $T\perp_S S$ or $Sx=0$ for some $x\in M_T$.
\end{theorem}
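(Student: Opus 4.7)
The plan is to reduce to the pointwise Birkhoff--James characterization already established for compact operators from reflexive domains, namely \cite[Theorem 2.1]{GSP}, and then use strict convexity of $Y$ to upgrade orthogonality at a single norm-attaining point to the strong orthogonality of the two operators themselves.

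First, I would invoke \cite[Theorem 2.1]{GSP}: since $X$ is reflexive and $T,S\in\K(X,Y)$ satisfy $T\perp_B S$, it produces some $x_0\in M_T$ with $Tx_0\perp_B Sx_0$. The case $T=0$ is degenerate and disposed of immediately (either $S=0$, in which case the statement is vacuous, or $S\neq 0$, in which case $\|\lambda S\|>0=\|T\|$ for every $\lambda\neq 0$ already gives $T\perp_S S$), so I may assume $T\neq 0$. Now I split on whether $Sx_0$ vanishes. If $Sx_0=0$, then the second alternative of the conclusion holds at $x_0\in M_T$ and we are done. Otherwise $Sx_0\neq 0$, and since $x_0\in M_T$ and $T\neq 0$ we also have $Tx_0\neq 0$; strict convexity of $Y$ combined with Remark \ref{rem:str-cvx} then upgrades $Tx_0\perp_B Sx_0$ to the strong orthogonality $Tx_0\perp_S Sx_0$, which is to say $\|Tx_0+\lambda Sx_0\|>\|Tx_0\|=\|T\|$ for every scalar $\lambda\neq 0$.

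The last step is to promote this pointwise strict inequality to the operator level. For any $\lambda\neq 0$,
\[
\|T+\lambda S\|\;\geq\;\|(T+\lambda S)x_0\|\;=\;\|Tx_0+\lambda Sx_0\|\;>\;\|T\|,
\]
which is precisely $T\perp_S S$. This yields the first alternative and completes the dichotomy.

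The substantive ingredient is the pointwise characterization from \cite[Theorem 2.1]{GSP}; once it is accepted as a black box, the remainder is a clean packaging of Remark \ref{rem:str-cvx} with the elementary estimate $\|Ax_0\|\leq\|A\|$ applied at the norm-attaining point $x_0$. I do not anticipate any genuine obstacle beyond carefully treating the degenerate cases $T=0$ or $Sx_0=0$, which are in fact exactly the two branches that appear in the disjunction of the conclusion.
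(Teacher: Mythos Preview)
There is a genuine gap. You invoke \cite[Theorem 2.1]{GSP} as if it asserted the pointwise Bhatia--\v{S}emrl conclusion ``$T\perp_B S \Rightarrow \exists\,x_0\in M_T$ with $Tx_0\perp_B Sx_0$'' for $X$ reflexive and $T,S$ compact, and then use strict convexity of $Y$ only to upgrade this to $T\perp_S S$. But that pointwise statement is \emph{false} in the very setting of the theorem. Take $X=\ell_1^2$, $Y=\ell_2^2$, $T(a,b)=(a,b)$ and $S(a,b)=(a,-b)$. Then $X$ is reflexive, $Y$ is strictly convex, $T,S$ are compact, and a direct computation gives $\|T+\lambda S\|=\max(|1+\lambda|,|1-\lambda|)=1+|\lambda|>\|T\|$ for all $\lambda\neq 0$, so $T\perp_S S$ (hence $T\perp_B S$). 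Yet $M_T=\{\pm e_1,\pm e_2\}$, and at each of these points $Sx_0=\pm Tx_0\neq 0$, so $Tx_0\not\perp_B Sx_0$ for every $x_0\in M_T$. Thus the black box you rely on simply does not hold, and your argument collapses at the first step.

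The paper's own treatment is different in kind: it does not use \cite[Theorem 2.1]{GSP} as a lemma but rather observes that the \emph{proof} of that result (which already establishes the dichotomy ``$T\perp_S S$ or $Sx=0$ for some $x\in M_T$'' in the case $X=Y$) goes through verbatim once one separates the reflexivity hypothesis on the domain from the strict convexity hypothesis on the range. In other words, strict convexity of $Y$ must enter the argument \emph{before} one produces the point $x_0$, not after; your attempt to postpone its use until the upgrade step is what breaks the proof.
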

In \cite[Theorem 2.1]{GSP}, the authors considered the case that $X=Y$ which is both reflexive and strictly convex, but their arguement can be applied for the setting of Theorem \ref{rem:str-cvxop}. 

\begin{theorem}\label{theorem:GSP}
Let $X$ be a reflexive Banach space and $Y$ be a strictly convex Banach space. If $\Lin (X, Y) = \mathcal{K} (X, Y)$, then $\BS (X, Y) = \BSa (X, Y)$. 
\end{theorem}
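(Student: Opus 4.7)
The plan is to reduce everything to a direct application of Theorem \ref{rem:str-cvxop}. Since $\BS(X,Y) \subseteq \BSa(X,Y)$ holds in general, only the reverse inclusion requires work. Accordingly, I would fix $T \in \BSa(X,Y)$ together with an arbitrary $S \in \Lin(X,Y)$ satisfying $T \perp_B S$, and aim to produce $x_0 \in M_T$ such that $Tx_0 \perp_B Sx_0$; this is exactly what is needed to conclude $T \in \BS(X,Y)$.

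The key step is to invoke Theorem \ref{rem:str-cvxop} for the pair $(T,S)$. Because $\Lin(X,Y) = \mathcal{K}(X,Y)$, both $T$ and $S$ are compact, so the hypotheses of that theorem are met. It then yields a dichotomy: either $T \perp_S S$, or there exists some $x \in M_T$ with $Sx = 0$.

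In the first case, the definition of $\BSa(X,Y)$ supplies directly a point $x_0 \in M_T$ with $Tx_0 \perp_B Sx_0$, which is the desired conclusion. In the second case, I simply set $x_0 := x$; since $Sx_0 = 0$, the inequality $\|Tx_0\| \leq \|Tx_0 + \lambda S x_0\|$ holds trivially for every scalar $\lambda$, so $Tx_0 \perp_B Sx_0$ is automatic.

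There is no substantial obstacle: the argument is essentially bookkeeping built on top of Theorem \ref{rem:str-cvxop}. The only small point to verify is that $T$ attains its norm so that $M_T$ is nonempty in the second alternative, but this is already baked into the definition of $\BSa(X,Y)$. Once the dichotomy is applied and both branches are handled, the inclusion $\BSa(X,Y) \subseteq \BS(X,Y)$ follows and, combined with the trivial reverse inclusion, gives the claimed equality.
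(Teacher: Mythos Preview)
Your proposal is correct and follows essentially the same argument as the paper's own proof: fix $T\in\BSa(X,Y)$ and $S$ with $T\perp_B S$, apply Theorem~\ref{rem:str-cvxop} to obtain the dichotomy, and handle each branch exactly as you describe. The paper's version is simply a more compressed rendering of the same reasoning.
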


\begin{proof} 
Since we only need to prove $\BSa (X, Y)\subset \BS (X, Y)$, we fix $T\in \BSa (X, Y)$ and $S\in \Lin (X, Y) $ so that $T\perp_B S$. From  Theorem \ref{rem:str-cvxop}, we have $T\perp_S S$ or $Sx_0=0$ for some $x_0\in M_T$. This shows that  $Tx_1\perp_B Sx_1$ for some $x_1\in M_T$.
\end{proof} 

\begin{example}
Let $X$ and $Y$ be Banach spaces. We have that $\BS (X, Y) = \BSa (X, Y)$ whenever
\begin{enumerate}
\setlength\itemsep{0.3em}
\item[\textup{(a)}] $X$ is finite dimensional and $Y$ is strictly convex. 
\item[\textup{(b)}] $X$ is reflexive and $Y$ is finite dimensional strictly convex.
\item[\textup{(c)}] (Pitt) $X$ is a closed subspace of $\ell_p$ and $Y$ is a closed subspace of $\ell_r$ with $1 < r< p< \infty$.
\item[\textup{(d)}] (Roshenthal) $X$ is a closed subspace of $L_p (\mu)$ and $Y$ is a closed subspace of $L_r (\nu)$, $1 < r < p < \infty$ and 
 \begin{itemize}
 \setlength\itemsep{0.3em}
   \item[\textup{(d1)}] $\mu$ and $\nu$ are atomic --also covered by (c)--,
   \item[\textup{(d2)}] or $1\leq r<2$ and $\nu$ is atomic,
   \item[\textup{(d3)}] or $p>2$ and $\mu$ is atomic.
    \end{itemize}
\end{enumerate}
\end{example}

Item (c) can be found in \cite[Theorem~2.1.4]{Albiac-Kalton}, for instance; and the item (d) can be found in the paper by H.~Rosenthal \cite[Theorem~A2]{Rosenthal-JFA1969}. 

~

We are now interested in operators defined on classical spaces. To do so, the following result becomes a useful tool of extending the range space from $\R$ to some specific Banach spaces. It contains a concept of \emph{property quasi-$\beta$}, which was first introduced in \cite{AAP1996} as a weakening of \emph{property $\beta$} of Lindenstrauss \cite{L}. We note that $c_0, \ell_\infty$ and finite dimensional spaces with a polyhedral unit ball are examples of Banach spaces satisfying property $\beta$ (hence property quasi-$\beta$) and every closed subspace of $c_0$ has property quasi-$\beta$ (see \cite[Example 3.2]{JMR}).

\begin{prop}\label{prop:beta}
Let $X$ be a Banach space such that $\BSa(X,\R)$ is norm-dense in $\Lin(X,\R)$, and let $Y$ be a Banach space with property quasi-$\beta$. Then, $\BSa(X,Y)$ is norm-dense in $\Lin(X,Y)$.
\end{prop}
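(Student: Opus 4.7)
The plan is to perturb $T$ into an operator rigged for the adjusted BS property via a single distinguished functional $y_0^* \in A$, adapting an Acosta-Aguirre-Pay\'a style perturbation for property quasi-$\beta$ and inserting the density of $\BSa(X,\R)$ at the crucial step. Write $A \subset S_{Y^*}$, $\sigma : A \to S_Y$, $\rho : A \to [0,1)$ for the data witnessing quasi-$\beta$, so that $y_0^*(\sigma(y_0^*)) = 1$, $|z^*(\sigma(y_0^*))| \leq \rho(y_0^*)$ for $z^* \neq y_0^*$, and $\|y\| = \sup_{a \in A} |a(y)|$ for every $y \in Y$.

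Given $T \in \Lin(X,Y)$ with $\|T\| = 1$ and $\eps > 0$, choose $y_0^* \in A$ with $\|T^* y_0^*\|$ close to $1$ relative to both $\eps$ and $1 - \rho(y_0^*)$; set $\sigma_0 := \sigma(y_0^*)$ and $r := \rho(y_0^*) < 1$. By the hypothesis, pick $g \in \BSa(X,\R)$ with $\|g\| = \|T^* y_0^*\|$ and $\|g - T^* y_0^*\|$ arbitrarily small, and define
\[
\tilde T x \,:=\, Tx + \bigl( c\, g(x) - (T^* y_0^*)(x) \bigr)\, \sigma_0
\]
for a scalar $c$ slightly larger than $1$. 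A direct computation gives $\tilde T^* y_0^* = cg$ and, for $z^* \in A \setminus \{y_0^*\}$, $\tilde T^* z^* = T^* z^* + (cg - T^* y_0^*)\, z^*(\sigma_0)$, so $|z^*(\sigma_0)| \leq r$ yields $\|\tilde T^* z^*\| \leq 1 + r\,\|cg - T^* y_0^*\|$. Choosing $c$, $\|g - T^* y_0^*\|$ and $1 - \|T^* y_0^*\|$ appropriately (depending on $\eps$ and $r$) simultaneously secures (a) $\|\tilde T - T\| < \eps$, (b) $\|\tilde T\| = \|\tilde T^* y_0^*\| = c\|g\|$, and (c) a uniform gap $\eta > 0$ such that $\sup_{z^* \in A \setminus \{y_0^*\}} \|\tilde T^* z^*\| < \|\tilde T\| - \eta$.

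To verify $\tilde T \in \BSa(X,Y)$, let $S \in \Lin(X,Y)$ satisfy $\tilde T \perp_S S$. For every $\lambda$ with $0 < |\lambda| < \eta/(2\|S\|)$, the gap (c) yields $\|(\tilde T + \lambda S)^* z^*\| < \|\tilde T\|$ for each $z^* \in A \setminus \{y_0^*\}$, while property (3) of quasi-$\beta$ gives $\|\tilde T + \lambda S\| = \sup_{a \in A}\|(\tilde T + \lambda S)^* a\| > \|\tilde T\|$; hence $\|(\tilde T + \lambda S)^* y_0^*\| > \|\tilde T^* y_0^*\|$ for all small $\lambda \neq 0$, and the convexity of the norm extends this to every $\lambda \neq 0$, so $cg \perp_S S^* y_0^*$ (equivalently $g \perp_S S^* y_0^*$) in $X^*$. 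The adjusted BS property of $g$ then produces $x_0 \in M_g$ with $(S^* y_0^*)(x_0) = 0$. Since $y_0^*(\tilde T x_0) = cg(x_0)$ has absolute value $c\|g\| = \|\tilde T\|$, the point $x_0$ lies in $M_{\tilde T}$, and
\[
\|\tilde T x_0 + \mu S x_0\| \,\geq\, \bigl|y_0^*(\tilde T x_0 + \mu S x_0)\bigr| \,=\, |y_0^*(\tilde T x_0)| \,=\, \|\tilde T x_0\| \qquad (\mu \in \R)
\]
gives $\tilde T x_0 \perp_B S x_0$, establishing $\tilde T \in \BSa(X,Y)$.

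The main technical obstacle lies in the selection of $y_0^* \in A$: without a uniform upper bound on $\rho$ (as one has in property $\beta$), arranging $1 - \|T^* y_0^*\|$ to be small compared to $(1 - \rho(y_0^*))\,\eps$ is precisely the step that distinguishes quasi-$\beta$ from Lindenstrauss's property $\beta$, and the one place where the stratified structure of quasi-$\beta$ must be used. Once $y_0^*$ is chosen correctly, everything else is clean bookkeeping combining the scalar-valued density hypothesis with the gap argument.
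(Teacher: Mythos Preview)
Your approach is essentially the same as the paper's: both follow the Acosta--Aguirre--Pay\'a construction for quasi-$\beta$ (the paper via \cite[Proposition~2.8]{CK}), building an approximant $\tilde T$ with $\tilde T^* y_0^* = cg \in \BSa(X,\R)$ and a strict gap at the distinguished $y_0^*$, and both isolate the same key observation---that $\tilde T \perp_S S$ forces $\tilde T^* y_0^* \perp_S S^* y_0^*$, after which the scalar adjusted $\BS$ property finishes. Your treatment is more explicit about the rank-one perturbation and the convexity argument extending the strict inequality to all $\lambda\neq 0$, while the paper simply cites \cite{CK} and records the one line that changes; in both cases the quasi-$\beta$ selection of $y_0^*$ (your ``stratified'' step) is left to the referenced argument rather than spelled out.
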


\begin{proof}
Since the proof is a minor modification of \cite[Proposition 2.8]{CK}, we just comment the key observation by adapting the same notion instead of giving full details. As in their proof, for $N=1$, we observe the assumption $B \perp_S C$ implies that $\| B^* y_{\alpha_0}^* + \lambda C^* y_{\alpha}^* \| > \| B^* y_{\alpha_0}^* \|$ for every $\lambda \neq 0$, that is, $B^* y_{\alpha_0}^* \perp_S C^* y_{\alpha_0}^*$. 
Since $B$ is constructed in such a way that $B^* y_{\alpha_0}^* \in \BSa (X, \mathbb{R})$, there exists $x \in M_{B^* y_{\alpha_0}^*}$ such that $B^* y_{\alpha_0}^* (x) \perp_B C^* y_{\alpha_0}^* (x)$. This completes the proof. 
\end{proof}

\subsection{When the domain space is $c_0$.}

In this subsection, we show that $\BSa(c_0,Y)$ can be norm-dense for some Banach space $Y$ while $\BS(c_0,Y) = \{0\}$ for many Banach spaces $Y$ such as strictly convex spaces or spaces with the Radon-Nikod\'ym property \cite[Corollary 3.6]{CK}. Moreover, it is known that $\BS(c_0,c_0)=\{0\}$ \cite[Proposition 3.7]{CK}. In contrast with this situation, we will observe that ${\BSa(c_0,c_0)}$ is norm-dense.

For more general setting, we consider $X$ as an $\ell_1$-predual space and $\phi : \ell_1 \rightarrow X^*$ be an isometric isomorphism. We denote the canonical basis of $X^*$ by $u_n^* = \phi (e_n^*)$, where $(e_n,e_n^*)$ is the canonical biorthogonal system of $c_0$. 
Note that the Banach space $c$ of convergent sequences is a typical example of an $\ell_1$-predual space while it is not isometric to $c_0$. Moreover, there exists an $\ell_1$-predual space which is not isomorphic to a $C(K)$-space \cite{BL} and an isomorphic predual of $\ell_1$ (that is, a Banach space whose dual is isomorphic to $\ell_1$) which has the Radon-Nikod\'ym property \cite{BD}.

\begin{theorem}\label{thm:ell1predual}
Let $X$ be an $\ell_1$-predual space. If $x^* \in X^*\setminus \{0\}$ is of the form $\sum_{k=1}^n a_k u_k^*$ with $n \in \N$, then $x^* \in \BSa (X, \mathbb{R})$. 
\end{theorem}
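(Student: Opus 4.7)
Via $\phi^{-1}$, identify $X^*$ with $\ell_1$ so that each $u_k^*$ becomes the $k$-th canonical basis vector of $\ell_1$, and $\|x^*\|=\sum_{k=1}^n|a_k|$. Discarding indices with $a_k=0$, assume $a_k\neq 0$ for every $k\le n$ and set $\epsilon_k:=\sgn(a_k)$. The argument splits into three parts: (i) showing that every finitely supported functional $\sum_{k=1}^N c_k u_k^*$ attains its norm at a point whose first $N$ coordinates equal $\sgn(c_k)$; (ii) characterising $x^*\perp_S y^*$ arithmetically in $\ell_1$; and (iii) averaging two such norm attainers to extract the required $x_0\in M_{x^*}$ with $y^*(x_0)=0$.

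For (i), the key input coming from the $\ell_1$-predual structure is that for any finite sign vector $(\eta_1,\dots,\eta_N)\in\{-1,+1\}^N$ there exists $x_0\in S_X$ with $u_k^*(x_0)=\eta_k$ for every $k\le N$; for $X=c_0$ or $X=c$ one just takes $x_0=(\eta_1,\dots,\eta_N,0,0,\dots)$, and in general this reflects the $\mathcal{L}_\infty$-structure of $\ell_1$-preduals. Given this, $w^*:=\sum_{k=1}^N c_k u_k^*$ attains its norm $\|w^*\|=\sum_k|c_k|$ at such an $x_0$ chosen with $\eta_k=\sgn(c_k)$; moreover, by the equality cases, any norm attainer of $w^*$ must satisfy $u_k^*(\cdot)=\sgn(c_k)$ on the support. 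In particular, the positive half-face $M_{x^*}^+:=\{x\in S_X:x^*(x)=\|x^*\|\}\subseteq M_{x^*}$ is precisely the convex set $\{x\in S_X:u_k^*(x)=\epsilon_k,\,k\le n\}$.

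For (ii), write $y^*=\sum_{k\ge 1}b_k u_k^*$. For $|\lambda|$ small enough that the signs of the first $n$ coordinates of $x^*+\lambda y^*$ remain $\epsilon_k$,
\[
\|x^* + \lambda y^*\|_{\ell_1}
\;=\; \sum_{k\le n}\epsilon_k(a_k+\lambda b_k) + |\lambda|\sum_{k>n}|b_k|
\;=\; \|x^*\| + \lambda d + |\lambda| c,
\]
where $d:=\sum_{k\le n}\epsilon_k b_k$ and $c:=\sum_{k>n}|b_k|$. Convexity of $\lambda\mapsto\|x^*+\lambda y^*\|_{\ell_1}$ lets the strict inequality for all $\lambda\neq 0$ be read off from the one-sided derivatives at $0$, yielding the equivalence $x^*\perp_S y^*\iff |d|<c$ (which in particular forces $c>0$).

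For (iii), set $\tilde y^*:=\sum_{k>n}b_k u_k^*$; since $u_k^*(x_0)=\epsilon_k$ for $k\le n$ whenever $x_0\in M_{x^*}^+$, the equation $y^*(x_0)=0$ collapses to $\tilde y^*(x_0)=-d$. For each $N>n$ let $\tilde y^*_N:=\sum_{n<k\le N}b_k u_k^*$; applying (i) to the finitely supported $x^*\pm\tilde y^*_N$ produces $x_\pm^N\in M_{x^*}^+$ with $u_k^*(x_\pm^N)=\pm\sgn(b_k)$ for $n<k\le N$. Writing $c_N:=\sum_{n<k\le N}|b_k|$, the uncontrolled tails give $\tilde y^*(x_\pm^N)\in[\pm c_N-(c-c_N),\,\pm c_N+(c-c_N)]$. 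Because $c_N\uparrow c$ and $|d|<c$, I may choose $N$ with $2c_N-c>|d|$, which traps $-d$ strictly inside $[\tilde y^*(x_-^N),\tilde y^*(x_+^N)]$. By convexity of $M_{x^*}^+$, the segment $x_\theta:=\theta x_+^N+(1-\theta)x_-^N$ lies in $M_{x^*}^+$ for every $\theta\in[0,1]$, and $\theta\mapsto\tilde y^*(x_\theta)$ is continuous; the intermediate value theorem then delivers $\theta^*\in(0,1)$ with $\tilde y^*(x_{\theta^*})=-d$, whence $y^*(x_{\theta^*})=0$ while $x^*(x_{\theta^*})=\|x^*\|\neq 0$, which in $\mathbb{R}$ is exactly $x^*(x_{\theta^*})\perp_B y^*(x_{\theta^*})$. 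The main obstacle is the lifting step (i): realising an extreme vertex of $B_{\ell_\infty^n}$ as the image of a norm-one vector in $X$ under the canonical quotient $x\mapsto(u_k^*(x))_{k\le n}$; everything else is a convex-analytic computation in $\ell_1$ together with the bracketing of $-d$ by $\tilde y^*(x_\pm^N)$.
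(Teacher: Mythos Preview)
Your proof is correct, and the arithmetic core — the equivalence $x^*\perp_S y^*\iff |d|<c$ — is exactly what the paper establishes (the paper only proves the direction it needs, by contradiction, but the content is the same). The difference lies in how the witness $x_0\in M_{x^*}$ with $y^*(x_0)=0$ is produced. The paper writes down the desired coordinate vector $(\epsilon_1,\dots,\epsilon_n,c_{n+1},\dots,c_m,0,\dots)\in c_{00}$ directly (choosing the tail so that $\sum_{k>n}c_kb_k=-d$), views it in $X^{**}$ via $(\phi^*)^{-1}$, and then pulls it down to $X$ using that the canonical projection $Q_m:X^*\to\operatorname{span}\{u_1^*,\dots,u_m^*\}$ is weak-$*$ continuous, so that $Q_m^*(F_m^*)\subset X$ (citing Gasparis). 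You instead invoke only the fact that every finitely supported functional on an $\ell_1$-predual attains its norm — precisely the fact the paper cites from \cite{DMRR} to get $x^*\in\NA(X,\R)$ — apply it to $x^*\pm\tilde y_N^*$ to manufacture two bracketing points $x_\pm^N\in M_{x^*}^+$, and finish by convexity of $M_{x^*}^+$ and the intermediate value theorem. Your route is more elementary in that it avoids the contractive-complementation machinery; the paper's route is more direct in that the witness is written down in one stroke rather than obtained by interpolation. Your one handwave is the justification of~(i) for a general $\ell_1$-predual (``reflects the $\mathcal{L}_\infty$-structure''): this is exactly the DMRR norm-attainment result, so it is a known fact rather than a gap, but a reference would be in order.
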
 

\begin{proof}
Let $x^* =\sum_{k=1}^n a_k u_k^*$ for some $n \in \N$ with $a_k \neq 0$ for $k =1,\ldots, n$. It is not difficult to check that $x^* \in \NA (X, \mathbb{R})$ (see, for instance, \cite[Theorem 5.10]{DMRR}). Let $y^* \in X^*$ so that $x^* \perp_S y^*$. Note that $\|y^* \| = \sum_{k=1}^\infty |\phi^{-1} (y^*) (e_k) | < \infty$. 

We claim that 
\begin{equation}\label{eq:ell1}
\left| \sum_{k=1}^n \sgn (a_k) b_k \right| < \sum_{k=n+1}^\infty |b_k|,
\end{equation} 
where $b_k = \phi^{-1} (y^*) (e_k)$ for each $k \in \N$ and $\sgn$ denotes the sign of a value in $\R$. Suppose that \eqref{eq:ell1} is not true. Then, for $\lam_0$ such that $0<|\lam_0| < \min_{1 \leq k \leq n} \frac{|a_k|}{\max\{1,|b_k|\}}$ and $\sgn \lam_0 = - \sgn(\sum_{k=1}^n \sgn(a_k)b_k)$, we have that
\begin{align*}
\|x^* + \lam_0 y^*\| &= \sum_{k=1}^\infty \left| \phi^{-1} (x^*) (e_k) + \lambda_0 \phi^{-1} (y^*)(e_k) \right| \\ 
&= \sum_{k=1}^n \left| |a_k| + \lam_0 \sgn(a_k) b_k \right| + |\lam_0| \sum_{k=n+1}^\infty |b_k| \\
&= \sum_{k=1}^n |a_k| - |\lam_0| \left| \sum_{k=1}^n \sgn(a_k) b_k \right| + |\lam_0| \sum_{k=n+1}^\infty |b_k| \leq \sum_{k=1}^n |a_k| = \|x^*\|,
\end{align*}
which is a contradiction. Now, consider an element
$$
x = \left( \sgn(a_1), \ldots, \sgn(a_n), c_{n+1}, c_{n+2}, \ldots \right) \in B_{c_{00}},
$$
where $(c_k) \in B_{c_{00}}$ is chosen so that
$$
\sum_{k=n+1}^\infty c_kb_k = - \sum_{k=1}^n \sgn(a_k)b_k.
$$
This is possible since
$$
\frac{\left| \sum_{k=1}^n \sgn (a_k) b_k \right|}{\sum_{k=n+1}^\infty |b_k|} < 1.
$$
Let us say $c_k = 0$ for every $k \geq m+1$ for some $m >n$. Consider $u:= (\phi^*)^{-1} (x) \in X^{**}$. 

Set $F_m := \spann \{ u_1^*, \ldots, u_m^* \} \subseteq X^*$ and let $Q_m : X^* \rightarrow F_m$ be the canonical contractive projection. Note that $Q_m^* (F_m^*)$ is naturally identified to a subspace of $X$ as $Q_m$ is weak-$*$ continuous (see \cite[Corollary 4.1]{Gasparis}).

 We see that $v:= Q_m^* (u \vert_{F_m} ) \in X$ is an element in $M_{x^*}$ where $u \vert_{F_m}$ is the restriction of $u$ on $F_m$. Indeed, observe that 
\begin{align*}
x^* (v) = Q_m (x^*) (u) = x^* (u) = \sum_{k=1}^n a_k u_k^* (u) &= \sum_{k=1}^n a_k \phi (e_k^*) ( (\phi^*)^{-1} (x)) )\\
&= \sum_{k=1}^n a_k e_k^* (x) \\
&= \sum_{k=1}^n |a_k| = \|x^* \|. 
\end{align*} 
 On the other hand, 
\begin{align*}
y^* (v) = Q_m (y^*) (u) &= \left(\sum_{k=1}^m b_k u_k^* \right) (\phi^*)^{-1} (x) \\
&= \sum_{k=1}^n b_k \sgn (a_k) + \sum_{k=n+1}^m b_k c_k = 0. 
\end{align*} 
This shows that $x^* \in \BSa (X, \mathbb{R})$. 
\end{proof} 

For an $\ell_1$-predual space $X$, it is well known that the set of extreme points of $B_{X^*}$ is $\{ \theta u_n^* : n \in \N, \theta= \pm 1\}$. By combining Theorem \ref{thm:ell1predual} with the Krein-Milman theorem we have the following.
 
\begin{cor}\label{cor:ell1predual}
Let $X$ be an $\ell_1$-predual space. Then the set $\BSa (X, \mathbb{R})$ is weak-$*$-dense in $X^*$. 
\end{cor}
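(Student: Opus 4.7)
The plan is to combine the explicit description $\operatorname{ext}(B_{X^*}) = \{\pm u_n^* : n \in \N\}$ with Theorem \ref{thm:ell1predual} via the Krein--Milman theorem, exactly as the preceding paragraph suggests.

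First I would argue that $E := \spann\{u_n^* : n \in \N\}$ is weak-$*$ dense in $X^*$. Its weak-$*$ closure $\overline{E}^{w^*}$ is a weak-$*$ closed linear subspace of $X^*$ containing $\{\pm u_n^* : n \in \N\}$; hence it contains the weak-$*$ closed convex hull of these extreme points, which by Krein--Milman together with the weak-$*$ compactness of $B_{X^*}$ equals $B_{X^*}$. Since $\overline{E}^{w^*}$ is a subspace containing the unit ball, it equals all of $X^*$.

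Next I would verify the inclusion $E \subseteq \BSa(X, \R)$. Any nonzero $x^* \in E$ takes the form $\sum_{k \in F} a_k u_k^*$ for a finite $F \subset \N$ with $a_k \neq 0$, so Theorem \ref{thm:ell1predual} applies (after the obvious cosmetic relabeling of the indices in $F$) and yields $x^* \in \BSa(X, \R)$. The zero functional also lies in $\BSa(X, \R)$: the hypothesis $0 \perp_S S$ forces $S \neq 0$, and then $0 \cdot x_0 = 0 \perp_B S x_0$ holds trivially for any $x_0 \in M_0 = S_X$.

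Combining these two observations yields $X^* = \overline{E}^{w^*} \subseteq \overline{\BSa(X,\R)}^{w^*}$, which is the desired weak-$*$-density. No genuine obstacle arises; the only point requiring a touch of attention is the mild reindexing that lets Theorem \ref{thm:ell1predual}, as stated for indices $\{1,\dots,n\}$, accommodate arbitrary finite index sets $F$ — and a glance at its proof confirms the argument is insensitive to this enumeration choice.
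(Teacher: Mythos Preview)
Your argument is correct and follows precisely the route the paper itself takes: combining the description of the extreme points of $B_{X^*}$ with Theorem~\ref{thm:ell1predual} via the Krein--Milman theorem. The only difference is that you spell out the details (weak-$*$ density of $E$, the zero functional, the harmless reindexing), whereas the paper leaves the corollary without proof after the sentence preceding it.
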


It is well known (and easy to check) that every norm attaining functional on $c_0$ is finitely supported. Hence, Theorem \ref{thm:ell1predual} shows that $\BSa(c_0,\R) = \NA(c_0,\R)$. In particular, we have the following result.

\begin{cor}\label{cor:c_0-to-R}
The set $\BSa(c_0,\R)$ is norm-dense in $\Lin(c_0,\R)$.
\end{cor}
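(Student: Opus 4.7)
The plan is to deduce this as a nearly immediate consequence of Theorem \ref{thm:ell1predual}, combined with the Bishop--Phelps theorem and the classical characterization of norm-attaining functionals on $c_0$. Note that $c_0$ is an $\ell_1$-predual space via the canonical isometric isomorphism $\phi : \ell_1 \to c_0^*$ sending $e_n^* \in \ell_1$ to the coordinate functional $u_n^* \in c_0^*$, so Theorem \ref{thm:ell1predual} applies with $X = c_0$.

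First I would record the standard fact that every norm-attaining functional on $c_0$ is finitely supported: if $y^* = \sum_{k=1}^\infty a_k u_k^* \in c_0^*$ has $a_k \neq 0$ for infinitely many $k$, then the supremum $\sum_{k=1}^\infty |a_k|$ is only approached by elements whose coordinates are $\sgn(a_k)$ on the support of $y^*$, which do not lie in $c_0$; hence the norm is not attained on $B_{c_0}$. Consequently, every $y^* \in \NA(c_0,\R)$ has the form $\sum_{k=1}^n a_k u_k^*$ for some $n \in \N$, and Theorem \ref{thm:ell1predual} immediately gives $y^* \in \BSa(c_0,\R)$. Together with the trivial inclusion $\BSa(c_0,\R) \subseteq \NA(c_0,\R)$, this yields the equality $\BSa(c_0,\R) = \NA(c_0,\R)$.

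Finally, I would invoke the Bishop--Phelps theorem, which asserts that $\NA(c_0,\R)$ is norm-dense in $\Lin(c_0,\R) = c_0^*$. Combined with the equality above, this gives that $\BSa(c_0,\R)$ is norm-dense in $\Lin(c_0,\R)$, as desired. There is no real obstacle here; the whole content has been placed in Theorem \ref{thm:ell1predual}, and the only point requiring a line of justification is the finite-support characterization of norm attainment on $c_0$.
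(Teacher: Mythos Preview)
Your proposal is correct and follows exactly the paper's route: the paper also observes that every norm-attaining functional on $c_0$ is finitely supported, applies Theorem \ref{thm:ell1predual} to conclude $\BSa(c_0,\R)=\NA(c_0,\R)$, and then (implicitly via Bishop--Phelps) obtains norm-density. There is nothing to add.
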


We finish the present subsection with giving the main consequence of Proposition \ref{prop:beta} combined with Corollary \ref{cor:c_0-to-R} and raising a natural open question due to the known result on the denseness of norm attaining operators from $c_0$ to a uniformly convex Banach space \cite{Kim2013}. 

\begin{cor}
Let $Y$ be a Banach space with property quasi-$\beta$. Then, $\BSa(c_0,Y)$ is norm-dense in $\Lin(c_0,Y)$. In particular, $\BSa(c_0,c_0)$ is norm-dense in $\Lin(c_0,c_0)$.
\end{cor}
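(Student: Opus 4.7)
The plan is simply to combine the two immediately preceding results. By Corollary \ref{cor:c_0-to-R}, the set $\BSa(c_0,\R)$ is norm-dense in $\Lin(c_0,\R)$, so $X=c_0$ satisfies the hypothesis on the domain in Proposition \ref{prop:beta}. Applying Proposition \ref{prop:beta} with this choice of $X$ and with any Banach space $Y$ enjoying property quasi-$\beta$ yields the first assertion.

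For the ``in particular'' part, I would invoke the fact already recorded in the paragraph preceding Proposition \ref{prop:beta}, namely that $c_0$ itself has property $\beta$ (and hence property quasi-$\beta$); specializing $Y=c_0$ in the statement just proved gives $\BSa(c_0,c_0)$ norm-dense in $\Lin(c_0,c_0)$.

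There is essentially no obstacle here: the substantive work is already done in Corollary \ref{cor:c_0-to-R} (which itself rests on Theorem \ref{thm:ell1predual}) and in the ``transfer'' lemma Proposition \ref{prop:beta}. The only care needed is to verify that $c_0$ is a legitimate choice for the range space, which is just the standard observation that if $\{y_\alpha^*\}\subset S_{Y^*}$ is the norming family used in the definition of property $\beta$ for $Y=c_0$, one can take $\{\pm e_n^*\}$. Thus the corollary is a direct corollary and the write-up is a one-line deduction.
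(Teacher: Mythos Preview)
Your proposal is correct and matches exactly what the paper does: the corollary is stated without proof as an immediate consequence of combining Corollary~\ref{cor:c_0-to-R} with Proposition~\ref{prop:beta}, and the ``in particular'' clause follows since $c_0$ has property~$\beta$ (hence quasi-$\beta$), as noted just before Proposition~\ref{prop:beta}.
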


\begin{question}
For a uniformly convex Banach space $Y$, is $\BSa(c_0,Y)$ norm-dense in $\Lin(c_0,Y)$?
\end{question}

\subsection{When the domain space is $L_1[0,1]$.}

Similarly to the case of $c_0$ we show that ${\BSa(L_1[0,1],\R)}$ is norm-dense in $\Lin(L_1[0,1],\R)$ while it is shown in \cite[Proposition 3.6]{KL} that $\BS(L_1[0,1],\R) = \{0\}$.

\begin{theorem}
The set $\BSa(L_1[0,1],\R)$ is norm-dense in $\Lin(L_1[0,1],\R)$.
\end{theorem}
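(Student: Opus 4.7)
The plan is to identify a sufficient ``gap'' condition on an element $g \in L_\infty[0,1]$ (identified with the functional $f \mapsto \int_0^1 fg$) guaranteeing membership in $\BSa(L_1[0,1], \mathbb{R})$, and then to verify that the set of such $g$ is norm-dense in $L_\infty[0,1]$. Say that $g$ has the \emph{gap property} if $E := \{t : |g(t)| = \|g\|_\infty\}$ has positive Lebesgue measure and there exists $\delta > 0$ with $|g(t)| \leq \|g\|_\infty - \delta$ for almost every $t \notin E$.

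For the density step, given $g$ with $M := \|g\|_\infty$ and $\epsilon \in (0, M/2)$, the set $A_\epsilon := \{t : M - \epsilon < |g(t)| \leq M\}$ has positive measure by the definition of essential supremum. I would set
\[
\tilde g(t) := \begin{cases} M \sgn g(t), & t \in A_\epsilon, \\ \dfrac{M - 2\epsilon}{M - \epsilon}\, g(t), & t \notin A_\epsilon, \end{cases}
\]
so that $\|\tilde g\|_\infty = M$, $\{|\tilde g| = M\} = A_\epsilon$, and $|\tilde g| \leq M - 2\epsilon$ off $A_\epsilon$; a direct estimate yields $\|g - \tilde g\|_\infty \leq \epsilon$, so $\tilde g$ has the gap property with gap $\delta = 2\epsilon$.

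The second step is to verify that any $g$ with the gap property induces a functional $T \in \BSa(L_1[0,1], \mathbb{R})$. Evidently $T$ attains its norm at $f_0 := \sgn g \cdot \chi_E / \mu(E) \in M_T$. Suppose $h \in L_\infty$ gives rise to $S$ with $T \perp_S S$, i.e., $\|g + \lambda h\|_\infty > M$ for every $\lambda \neq 0$. I would partition $E$ (up to null sets) as $E = E_0 \cup E_+ \cup E_-$ according to whether $h \sgn g$ is zero, positive, or negative. If $\mu(E_0) > 0$, the choice $f = \sgn g \cdot \chi_{E_0} / \mu(E_0)$ lies in $M_T$ with $Sf = \int fh = 0$; if both $\mu(E_+), \mu(E_-) > 0$, a positive combination of $\sgn g \cdot \chi_{E_\pm}$ with weights tuned so that $\int fh = 0$ gives the desired $f \in M_T$. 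In the real scalar range, $Sf = 0$ is precisely $Tf \perp_B Sf$, since $Tf = M \neq 0$.

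The main obstacle is the remaining case, in which $h \sgn g$ has strictly fixed sign a.e.\ on $E$, and this is where the gap is indispensable. If, say, $h \sgn g > 0$ a.e.\ on $E$, then for $0 < t < \delta / \|h\|_\infty$ one has $|g - t h| = M - t\, h \sgn g < M$ on $E$ and $|g - t h| \leq (M - \delta) + t \|h\|_\infty < M$ off $E$, so $\|g - t h\|_\infty < M$, contradicting $T \perp_S S$; the opposite sign is handled by reversing the sign of $t$. Without the gap $\delta > 0$, the essential supremum of $|g|$ off $E$ could reach $M$ and this perturbation argument would collapse, which is precisely what forces the specific flattening construction of $\tilde g$.
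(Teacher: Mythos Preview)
Your proof is correct and follows essentially the same route as the paper's: approximate by a function $g$ whose maximum level set $E$ has positive measure and is separated from the rest by a gap $\delta$, then use the gap to show that $g \perp_S h$ forces $h\,\sgn g$ to take both signs on $E$, after which a suitably weighted $f$ supported on $E$ lies in $M_T$ with $\int fh = 0$. One small imprecision worth noting: in your final case the pointwise inequality $|g - th| = M - t\,h\sgn g < M$ on $E$ need not give $\|g - th\|_\infty < M$ (the essential infimum of $h\sgn g$ on $E$ may be $0$), but it does give $\|g - th\|_\infty \leq M$, and that already contradicts $g \perp_S h$, so the argument goes through unchanged.
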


\begin{proof}
We claim that for a fixed $f\in L_\infty[0,1]\setminus\{0\}$ and $\varepsilon>0$ there exists $g\in\BSa(L_1[0,1],\R)$ such that $\|g-f\|<\varepsilon$. For a suitable number $0<\delta<\max\{\varepsilon,\|f\|\}$, we assume that $\sigma(\{s\in[0,1]: f(s)>\|f\|-\delta\})>0$ where $\sigma$ is the Lebesgue measure on $[0,1]$. Otherwise we apply the following proof for $-f$.

Consider a function $g \in L_\infty[0,1]$ defined by
\begin{displaymath}
g(s):=\left\{\begin{array}{@{}cl}
\displaystyle \, \|f\| & \text{if } \|f\|-\delta< f(s) \\
\displaystyle \, f(s) & \text{if } -\|f\| +\delta \leq f(s) \leq \|f\| - \delta\\
\displaystyle \, -\|f\| + \delta \ & \text{if } f(s) <-\|f\| +\delta.
\end{array} \right.
\end{displaymath}
From the construction, we have that 

\begin{enumerate}
\setlength\itemsep{0.3em}
\item[\textup{(i)}] $\|f-g\|<\delta$
\item[\textup{(ii)}] $\sigma(\{s \in [0,1] \colon g(s)=\|g\|\})>0$ and
\item[\textup{(iii)}] $|g(t)| \leq \|g\|-\delta$ or $g(t) = \|g\|$ for each $t\in [0,1]$. 
\end{enumerate}

To see $g\in\BSa(L_1[0,1],\R)$, take $h \in L_\infty[0,1]$ such that $g \perp_S h$.
For $\Phi := \{ s \in [0,1] \colon g(s)=\|g\| \}$, $\Phi^+ :=\{t \in \Phi \colon h(x)>0\}$ and $\Phi^- :=\{t \in \Phi \colon h(x)<0\}$, we see that both $\sigma(\Phi^+)$ and $\sigma(\Phi^-)$ are positive. Indeed, if $\sigma(\Phi^+) =0$, then it follows that $\|g+\lam h\|\leq \|g\|$ for $0<\lam<\delta/\|h\|$. Similarly if $\sigma(\Phi^-) =0$, then $\|g-\lam h\|\leq \|g\|$ for $0<\lam<\delta/\|h\|$.

We now define a function $\varphi \in S_{L_1[0,1]}$ by
\begin{displaymath}
\varphi(t):=\left\{\begin{array}{@{}cl}
\displaystyle \, \frac{H^-}{H^+\sigma(\Phi^-)+H^-\sigma(\Phi^+)} & \text{on } t \in \Phi^+ \\
\displaystyle \, \frac{H^+}{H^+\sigma(\Phi^-)+H^-\sigma(\Phi^+)} & \text{on } t \in \Phi^- \\
\displaystyle \, \phantom{\Big[}0\phantom{\Big]} & \text{otherwise},
\end{array} \right.
\end{displaymath}
where $H^+$ and $H^-$ are given by $H^+ = \int_{\Phi^+} h \,d\sigma >0$ and $H^- = -\int_{\Phi^-} h \,d\sigma >0$. We can easily see that $\int_0^1 g \varphi \,d\sigma =\|g\|$ and $\int_0^1 h \varphi \,d\sigma =0$, which shows that $g$ attains its norm at $\varphi$ and $\int_0^1 g \varphi \,d\sigma \perp_B \int_0^1 h \varphi \,d\sigma$. Hence, $g$ is an element in $\BSa(L_1[0,1],\R)$.
\end{proof}

\begin{cor}
For a Banach space $Y$ with property quasi-$\beta$, the set $\BSa(L_1[0,1],Y)$ is norm-dense in $\Lin(L_1[0,1],Y)$.
\end{cor}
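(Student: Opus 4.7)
The plan is to observe that this corollary is an immediate application of two results already established in the paper: the preceding theorem, which asserts that $\BSa(L_1[0,1],\R)$ is norm-dense in $\Lin(L_1[0,1],\R)$, and Proposition \ref{prop:beta}, which lifts such a scalar-valued denseness statement to a vector-valued one whenever the range space $Y$ has property quasi-$\beta$.

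More precisely, I would take $X = L_1[0,1]$ in Proposition \ref{prop:beta}. The hypothesis of that proposition, namely that $\BSa(X,\R)$ is norm-dense in $\Lin(X,\R)$, has just been verified in the preceding theorem. Since $Y$ is assumed to have property quasi-$\beta$, the conclusion of Proposition \ref{prop:beta} delivers exactly the statement we want: $\BSa(L_1[0,1],Y)$ is norm-dense in $\Lin(L_1[0,1],Y)$.

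Since the entire argument consists of citing the two previous results and invoking them with $X = L_1[0,1]$, there is no real obstacle to overcome here; any delicate work has already been done in the proofs of the preceding theorem (the explicit truncation construction producing $g$ together with the density function $\varphi$) and of Proposition \ref{prop:beta} (the perturbation argument adapted from \cite[Proposition 2.8]{CK}). Hence the proof reduces to a one-line application, and I would write it simply as: the claim follows from the previous theorem together with Proposition \ref{prop:beta}.
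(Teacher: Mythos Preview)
Your proposal is correct and matches the paper's approach exactly: the paper gives no explicit proof for this corollary, treating it as an immediate consequence of the preceding theorem (norm-density of $\BSa(L_1[0,1],\R)$) together with Proposition~\ref{prop:beta}, just as you describe.
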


The next result reveals not only that the set of norm attaining operators without the adjusted $\BS$ property may be nonempty but also its portion can be large. This generalizes the previously known fact \cite[Theorem 3.2]{K} that if $Y$ has the Radon-Nikod\'ym property, then the set of norm attaining operators without the B\v{S} property is norm-dense in $\mathcal{L}(L_1([0,1]),Y)$.

\begin{theorem}\label{theorem:L_1-RNP}
Let $Y$ be a Banach space with the Radon-Nikod\'ym property. Then, the set of norm attaining operators without the adjusted $\BS$ property is norm-dense in $\Lin(L_1[0,1],Y)$.
\end{theorem}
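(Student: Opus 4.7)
The plan is to adapt the density argument of \cite[Theorem 3.2]{K} for the $\BS$ property, now producing an auxiliary operator $S$ satisfying $T\perp_S S$ rather than merely $T\perp_B S$. The main tool is the isometric identification $\Lin(L_1[0,1],Y)\cong L_\infty([0,1],Y)$ afforded by the RNP, under which $T$ corresponds to a strongly measurable $g_T:[0,1]\to Y$ with $\|T\|=\|g_T\|_{L_\infty}$ and $Tf=\int_0^1 f(t)g_T(t)\,dt$.

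Given $T_0$ and $\varepsilon>0$, by the RNP I select two distinct strongly exposed points $y_0,y_1\in S_Y$ with their (unique) norming functionals $y_0^*,y_1^*\in S_{Y^*}$; since any two distinct strongly exposed points cannot share a common proper face of $B_Y$, one has $\|ay_0+by_1\|<a+b$ for every $a,b>0$. Following the approximation scheme of \cite[Theorem 3.2]{K}, modify $T_0$ within $\varepsilon$ in operator norm to produce $T$ whose representing function $g_T$ equals $\|T\|y_0$ on a small positive-measure set $A_1$, equals $\|T\|y_1$ on a small disjoint positive-measure set $A_2$, and satisfies $\|g_T(t)\|<\|T\|$ strictly on $[0,1]\setminus(A_1\cup A_2)$. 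The no-common-face property of $y_0,y_1$ then forces every $f\in M_T$ to be supported in exactly one of $A_1,A_2$ with constant sign, so $M_T$ splits into four pure types with $Tf\in\{\pm\|T\|y_0,\pm\|T\|y_1\}$.

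Define the breaker $S$ by $g_S=y_0$ on $A_1$, $g_S=-y_1$ on $A_2$, and $g_S=0$ elsewhere. For each pure type of $f\in M_T$, $Sf$ is a nonzero scalar multiple of $Tf$, and since $z\not\perp_B cz$ whenever $z\neq 0$ and $c\neq 0$, it follows that $Tf\not\perp_B Sf$ for every $f\in M_T$. To verify $T\perp_S S$, compute $(g_T+\lambda g_S)(t)$ pointwise: on $A_1$ it equals $(\|T\|+\lambda)y_0$ of norm $|\|T\|+\lambda|$, strictly exceeding $\|T\|$ for $\lambda>0$; on $A_2$ it equals $(\|T\|-\lambda)y_1$ of norm $|\|T\|-\lambda|$, strictly exceeding $\|T\|$ for $\lambda<0$. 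Hence $\|T+\lambda S\|>\|T\|$ for every $\lambda\neq 0$, so $T\perp_S S$. This gives $T\in\NA(L_1[0,1],Y)\setminus\BSa(L_1[0,1],Y)$ within $\varepsilon$ of $T_0$, proving density.

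The main obstacle will be the approximation step: one must align $g_T$ with the preselected strongly exposed directions $y_0,y_1$ on two small positive-measure sets while keeping the $L_\infty$-perturbation of $g_{T_0}$ below $\varepsilon$. I plan to handle this via the standard density of norm-attaining operators from $L_1$, combined with a Bishop-Phelps-Bollob\'as-type / Stegall-perturbation adjustment that uses the RNP to replace a near-attainment direction of $T_0$ by a nearby strongly exposed point, and a Lebesgue-density selection of $A_1, A_2$. This mirrors the delicate step in \cite[Theorem 3.2]{K}; the essential new feature is the antisymmetric choice of $g_S$ on $A_1$ versus $A_2$ (namely $y_0$ on one and $-y_1$ on the other), which is precisely what upgrades the orthogonality from $\perp_B$ to $\perp_S$.
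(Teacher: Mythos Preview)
Your proposal has a genuine gap in the geometric claim about strongly exposed points. You assert that ``any two distinct strongly exposed points cannot share a common proper face of $B_Y$'', but this is false: in $Y=\ell_\infty^2$ (which certainly has the Radon--Nikod\'ym property) the corners $(1,1)$ and $(1,-1)$ are both strongly exposed yet lie on the common edge $\{1\}\times[-1,1]$. In that situation your splitting of $M_T$ into four pure types fails: taking $c_1=c_2=\tfrac12$ one obtains $f\in M_T$ supported on both $A_1$ and $A_2$ with $Tf=\|T\|(c_1y_0+c_2y_1)=\|T\|(1,0)$ and $Sf=c_1y_0-c_2y_1=(0,1)$, and then $Tf\perp_B Sf$ in $\ell_\infty^2$. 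Thus your breaker $S$ does not witness failure of the adjusted $\BS$ property. Nor can this be repaired by a cleverer choice of $y_0,y_1$: in $\ell_\infty^2$ any two non-antipodal extreme points share an edge, while the antipodal choice $y_1=-y_0$ produces $Sf=0$ for suitable $f\in M_T$. There is also a secondary issue with the approximation step: aligning $g_T$ on two small sets with \emph{preselected} directions $y_0,y_1$ while staying $\varepsilon$-close to an arbitrary $T_0$ is not what a Stegall or BPB-type perturbation gives you; those tools let the attained direction emerge from $T_0$, not be prescribed in advance.

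The paper's construction avoids all of this by using a \emph{single} direction $z_k$. One first passes to a countably-valued approximant and rescales so that the norm is attained exactly on one piece $E_k\setminus E_0$ (with value $z_k$) and is strictly below $\|z_k\|$ elsewhere; consequently every $\varphi\in M_g$ is supported in $E_k\setminus E_0$. On a small subset of $E_0$ one then inserts a continuous ramp $t\mapsto[(1+t_0)-t]z_k$ whose norm approaches but never reaches $\|z_k\|$, and defines $h$ to equal $+z_k$ on a fat-Cantor subset $C$ and $-z_k$ on its complement there, while $h=g$ off this set. Because both $C$ and $C^c$ have positive measure in every neighbourhood of $t_0$, one gets $\|g+\lambda h\|\geq(1+|\lambda|)\|z_k\|>\|g\|$ for every $\lambda\neq0$, hence $g\perp_S h$; and since $h=g$ on the support of any $\varphi\in M_g$, one has $\int h\varphi=\int g\varphi$, so $g(\varphi)\not\perp_B h(\varphi)$. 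The idea you are missing is precisely this ramp-plus-Cantor mechanism, which manufactures the strong orthogonality \emph{off} the norm-attaining set while keeping $h\equiv g$ \emph{on} it.
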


\begin{proof}

From \cite[Theorem 5, p. 63]{DU}, we consider an operator $T\in \Lin(L_1[0,1],Y)$ as a function $f\in L_\infty([0,1],Y)$ isometrically by the identification $Th=\int_0^1 fh~d\sigma$ for all $h \in L_1[0,1]$ where $\sigma$ is the Lebesgue measure.

 For a non-zero $f \in L_\infty([0,1],Y)$ and given $0<\eps<\|f\|$, it is enough to show that there exists a norm attaining function $g\in L_\infty([0,1],Y)$ without the adjusted $\BS$ property so that $\|g-f\|<\eps$.

 From \cite[Corollary 3, p. 42]{DU}, a function $f$ can be approximated by countably valued functions. Hence, there exists $f_0 = \sum_{i=1}^\infty y_i \chi_{E_i} \in L_\infty([0,1],Y)$ satisfying $\|f_0-f\|<{\eps}/{4}$ and $\|f_0\|=\|f\|$ where $E_i\subset [0,1]$ is a sequence of mutually disjoint Lebesgue measurable subsets for $i \in \N$.

Fix $k\in \mathbb{N}$ such that $\|y_k\|>\|f\|-{\eps}/{4}$ and $\sigma(E_k)>0$, take a Lebesgue measurable subset $E_0\subset E_k$ so that $0<\sigma(E_0)<\sigma(E_k)$. 
Define 
\[
f_1= z_0 \chi_{E_0} + \sum_{i \in \mathbb{N} \setminus \{k\}} z_i \chi_{E_i} + z_k \chi_{E_k \setminus E_0} \in L_\infty([0,1],Y),
\]
where
$$
z_0=\left(\|f\| -\dfrac{\eps}{4}\right) \dfrac{y_k}{\|y_k\|}, \quad z_k=\|f\|\dfrac{y_k}{\|y_k\|} \quad \text{and} \quad z_i=\left(1 -\dfrac{\eps}{4\|f\|}\right)y_i
$$
for each $i \in \N \setminus \{k\}$. From the construction, we have that
\begin{enumerate}
\setlength\itemsep{0.3em}
\item[\textup{(i)}] $\|f-f_1\|<\dfrac{\eps}{2}$,
\item[\textup{(ii)}] $\|z_k\|=\|f\|=\|f_1\|$,
\item[\textup{(iii)}] $z_0 = \left(1 -\dfrac{\eps}{4\|f\|}\right) z_k$ and
\item[\textup{(iv)}] $\|z_i\|\leq\|f\|-\dfrac{\eps}{4}$ for all $i \in \N \setminus \{k\}$.
\end{enumerate}
Put $\eta := \min\left\{ \dfrac{\eps}{4\|f\|}, \dfrac{\sigma(E_0)}{2}\right\}>0$ and fix any $t_0 \in [0,1 - \eta] \cap E_0$ such that $\sigma([t_0,t_0+\delta] \cap E_0)>0$ for any $\delta>0$. Define a new function $g \in L_\infty([0,1],Y)$ by
\begin{displaymath}
g(t):=\left\{\begin{array}{@{}cl}
\displaystyle \, \bigl[(1+t_0)-t\bigr]z_k & \displaystyle \text{if } t \in [t_0,t_0+\eta] \cap E_0, \\
\displaystyle \, f_1(t) & \text{otherwise}.
\end{array} \right.
\end{displaymath}
It is clear that $\|g-f\| \leq \|g-f_1\| + \|f_1-f\| <\eps$ and $\|g\|=\|f\|$. To see that $g$ does not have the adjusted $\BS$ property, consider a function $h \in L_\infty([0,1],Y)$ defined by
\begin{displaymath}
h(t):=\left\{\begin{array}{@{}cl}
\displaystyle \, z_k & \text{if } t \in [t_0,t_0+\eta] \cap E_0 \cap C \\
\displaystyle \, -z_k & \text{if } t \in [t_0,t_0+\eta] \cap E_0 \cap C^c \\
\displaystyle \, g(t) & \text{otherwise},
\end{array} \right.
\end{displaymath}
where $C$ is a nowhere dense closed subset of $[t_0,t_0+\eta] \cap E_0$ satisfying that
$$
0<\sigma([t_0,t_0+\delta]\cap E_0 \cap C) < \sigma([t_0,t_0+\delta] \cap E_0)
$$
for any small $0<\delta<\eta$. For instance, we may take a fat Cantor-type set distributed on $[t_0,t_0+\eta] \cap E_0$.

To show that $g \perp_S h$, we fix any $|\lambda|>0$ and obtain
\begin{align*}
\|g+\lambda h\| &\geq \operatorname{ess\,sup} \bigl\{\|g(t) + \lambda h(t)\| : t \in [t_0,t_0+\eta] \cap E_0\bigr\} \\
&\geq \max \bigl\{ \|z_k +\lambda z_k\|, \|z_k -\lambda z_k\|\bigr\} \\
&= (1+ |\lambda|) \|z_k\| > \|g\|
\end{align*}
from the construction of $h$. However, $\int_0^1 g\varphi \,d\sigma \not\perp_B \int_0^1 h\varphi \,d\sigma$ for all $\varphi \in M_g$. Indeed, as we know that
\begin{align*}
\|z_k\|=\|f\|=\|g\|&=\left\|\int_0^1 g\varphi \,d\sigma\right\|\\
&\leq \|z_0\|\int_{E_0} |\varphi| \,d\sigma+\sum_{i \in \mathbb{N} \setminus \{k\}} \|z_i\|\int_{E_i} |\varphi| \,d\sigma+\|z_k\|\int_{E_k\setminus E_0} |\varphi| \,d\sigma\\
&\leq \|z_k\|\|\varphi\|\\
&=\|z_k\|
\end{align*}
the support of $\varphi$ belongs $E_k \setminus E_0$ almost everywhere. Moreover, the fact that $h(t)=g(t)$ almost everywhere on $E_k \setminus E_0$ leads to $\int_0^1 h\varphi \,d\sigma = \int_0^1 g\varphi \,d\sigma$.
\end{proof}

We close the present subsection by raising the following question on denseness of $\BSa(L_1[0,1],Y)$. It is worth mentioning that $\NA (L_1 [0,1], Y)$ is norm-dense in $\Lin(L_1[0,1],Y)$ when a Banach space $Y$ has the Radon-Nikod\'ym property \cite{uhl}. 

\begin{question}
Is $\BSa(L_1[0,1],Y)$ norm-dense in $\Lin(L_1[0,1],Y)$ for a Banach space $Y$ with the Radon-Nikod\'ym property?
\end{question}

\subsection{When the domain space is $C[0,1]$.}

Recall from \cite[Proposition 3.7]{KL} that
$$
\BS(C[0,1],Y) \subseteq \left\{ T \in \NA(C[0,1],Y) : M_T =\left\{ \pm \chi_{[0,1]}\right\}\right\} \cup \{0\}
$$
for every reflexive strictly convex Banach space $Y$. In particular, this shows that $\BS(C[0,1],\R)$ is not norm-dense in $\Lin(C[0,1],\R)$. Similarly, we find that $\BSa(C[0,1],\R)$ is not norm-dense, but we show that the same set is weak-$*$-dense even though it is not true for $\BS(C[0,1],\R)$.

To deduce these results, we use the classical Riesz representation theorem. Indeed, we consider the space $\Lin(C[0,1],\R)$ isometrically as the space $\mathcal{M}[0,1]$ of all finite regular Borel measures on $[0,1]$ with the norm induced by the total variation $\| \mu \| = |\mu| ([0,1])$. The duality is given by
\[
\mu (f) = \int_{[0,1]} f \, d\mu.
\]
Thus, it is natural to say that a measure $\mu \in \mathcal{M}[0,1]$ is \emph{norm attaining} if there exists $f\in S_{C[0,1]}$ such that $\|\mu\| = |\mu(f)|$.

We first present some sufficient conditions for regular Borel measures on $[0,1]$ to lack the adjusted $\BS$ property.
 
\begin{prop}\label{example1}A norm attaining regular Borel measure $\mu\in \mathcal{M}[0,1]$ satisfying the following two properties does not have the adjusted $\BS$ property. 
\begin{enumerate}
\setlength\itemsep{0.3em}
\item[\textup{(i)}] there exists an interval $A=(\alpha,\beta) \subset [0,1]$ such that $F(t)=\mu\bigl((\alpha,t)\bigr)$ on $[\alpha,\beta]$ is an nondecreasing \textup{(}or nonincreasing\textup{)} continuous function which is not a constant.
\item[\textup{(ii)}] there exists an interval $C=(\gamma,\delta) \subset [0,1]\setminus A$ such that $|\mu|(C)=0.$
\end{enumerate}
\end{prop}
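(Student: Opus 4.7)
The plan is to exhibit a Dirac measure $\nu=\delta_p$, with $p$ chosen inside $A$, that witnesses the failure: $\mu\perp_S\delta_p$ yet $\delta_p(f)\neq 0$ for every $f\in M_\mu$. WLOG assume $F$ is nondecreasing (the nonincreasing case is symmetric). Continuity of $F$ at any $t\in(\alpha,\beta)$ forces $\mu(\{t\})=0$, since the right-hand jump $F(t^+)-F(t)$ equals $\mu(\{t\})$; monotonicity and nonconstancy then make $\mu|_A$ a nonzero positive atomless measure on $A$. A standard compactness argument (otherwise $[\alpha+1/n,\beta-1/n]$ for large $n$ would be covered by open sets of $\mu$-measure zero, forcing $\mu(A)=0$) produces some $p\in \supp(\mu|_A)\cap A$.

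To verify $\mu\perp_S\delta_p$: since $p\in A$, we have $\mu(\{p\})=0$, so $\mu$ and $\delta_p$ are mutually singular. The Jordan decomposition then yields $\|\mu+\lambda\delta_p\|=\|\mu\|+|\lambda|$ for every $\lambda\in\R$, which strictly exceeds $\|\mu\|$ whenever $\lambda\neq 0$.

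To verify $\delta_p(f)\neq 0$ on $M_\mu$: fix $f\in M_\mu$ and, by symmetry, assume $\mu(f)=+\|\mu\|$. The chain
\[
\|\mu\|=\mu(f)=\int f\,d\mu^+-\int f\,d\mu^-\le \mu^+([0,1])+\mu^-([0,1])=\|\mu\|,
\]
together with $|f|\le 1$, forces $f=1$ $\mu^+$-a.e.\ and $f=-1$ $\mu^-$-a.e. Since $f$ is continuous, the open set $\{f<1\}$ is $\mu^+$-null, hence disjoint from $\supp(\mu^+)$, so $f\equiv 1$ on $\supp(\mu^+)\supset \supp(\mu|_A)\ni p$. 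Therefore $\delta_p(f)=f(p)=1\neq 0$. As $\mu(f)\neq 0$, the $\R$-valued relation $\mu(f)\perp_B\delta_p(f)$ is equivalent to $\delta_p(f)=0$, which is excluded; thus $\mu\notin\BSa(C[0,1],\R)$. The only delicate step is the pointwise upgrade from ``$f=1$ $\mu^+$-a.e." to ``$f\equiv 1$ on $\supp(\mu^+)$", which relies on continuity of $f$ and openness of $\{f<1\}$; the rest is a direct total-variation computation with a singular Dirac.
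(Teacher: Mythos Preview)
Your proof is correct and takes a genuinely different---and considerably more elementary---route than the paper's. The paper constructs an elaborate signed measure $\nu$ built from countably many subintervals $A_i,B_i$ of $A$ together with a piece supported on the null interval $C$; hypothesis (ii) is used precisely to make $\nu(f)$ strictly negative via the observation that no continuous $f$ can jump from $+1$ to $-1$ across the midpoint of $C$. By contrast, your choice $\nu=\delta_p$ with $p\in\supp(\mu|_A)\cap A$ bypasses all of this: mutual singularity of $\mu$ and $\delta_p$ gives $\|\mu+\lambda\delta_p\|=\|\mu\|+|\lambda|$ immediately, and the support argument forces $f(p)=1$ for every $f\in M_\mu$ with $\mu(f)=\|\mu\|$.

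Two remarks. First, your argument never invokes hypothesis (ii), so you have in fact proved the stronger statement that (i) alone suffices; you might flag this explicitly. Second, the ``standard compactness argument'' you sketch is more than is needed: since $A$ is open and $\mu|_A(A)>0$, the inclusion $\supp(\mu|_A)\subset[0,1]\setminus A$ would force $\mu|_A(A)=0$, so $\supp(\mu|_A)\cap A\neq\emptyset$ follows directly. What the paper's longer construction buys is a template reused verbatim in the companion Proposition~\ref{example2} (the purely atomic case), where a single Dirac would not yield strong orthogonality; but for the present proposition your Dirac witness is both cleaner and sharper.
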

\begin{proof}
We only give a proof for the case that $F(t)$ is nondecreasing, since the other case can be proved by taking $-\mu$. 

It is clear that $\mu$ is non-negative on $A$ and there are sequeces $(A_i)_{i=1}^\infty$ and $(B_i)_{i=1}^\infty$ of mutually disjoint open intervals in $A$ satisfying $\mu(A_i)=\mu(B_i)={\mu(A)}/{4^i}>0$ for every $i\in \mathbb{N}$ and $A_i\cap B_j=\phi$ for every $i,j\in \mathbb{N}$.
Note that $\mu\left( A \setminus \left(\cup_{i\in \mathbb{N}} (A_i\cup B_i)\right)\right)>0$. Define a regular Borel measure $\nu$ as  
\begin{align*}
d\nu = \sum_{i=1}^\infty 2^i (\chi_{A_i} - &\chi_{B_i}) d\mu - \chi_{A \setminus \left(\cup_{i\in \mathbb{N}} (A_i\cup B_i)\right)} d\mu \\
&+ \frac{\mu\left( A \setminus \left(\cup_{i\in \mathbb{N}} (A_i\cup B_i)\right)\right)}{\delta-\gamma} \left(\chi_{\left(\gamma,\frac{\gamma+\delta}{2}\right)}d\sigma-\chi_{\left(\frac{\gamma+\delta}{2},\delta\right)} d\sigma \right), 
\end{align*}
where $\chi_{\{\cdot\}}$ is the characteristic function and $\sigma$ is the Lebesgue measure.

We first claim that $\mu$ is strongly orthogonal to $\nu$ in the sense of Birkhoff-James. Indeed, for $0<\lambda<1$, take the smallest $n_0\in\mathbb{N}$ so that $\lambda2^{n_0}> 1$. Then, we have
\begin{align*}
\|\mu+ \lambda \nu\|
&= |\mu+ \lambda \nu|([0,1] \setminus (A \cup C))+|\mu+ \lambda \nu|(\cup_{i\in \mathbb{N}} \left(A_i\cup B_i)\right) \\
&\qquad +|\mu+ \lambda \nu|\left(A \setminus (\cup_{i\in \mathbb{N}} (A_i\cup B_i)\right)+|\mu+ \lambda \nu|(C) \\
&=|\mu|([0,1] \setminus A) + (\mu+\lambda\nu)(\cup_{i< n_0} \left(A_i\cup B_i)\right)\\
&\qquad +(\mu+\lambda\nu)\left(\cup_{i\geq n_0} A_i\right)-(\mu+\lambda\nu)\left(\cup_{i\geq n_0} B_i\right)\\
&\qquad\qquad +(1-\lambda)\mu\left( A \setminus \left(\cup_{i\in \mathbb{N}} (A_i\cup B_i)\right)\right)+\lambda\mu\left( A \setminus \left(\cup_{i\in \mathbb{N}} (A_i\cup B_i)\right)\right)\\
&= |\mu|([0,1] \setminus \left(\cup_{i\geq n_0} (A_i\cup B_i)\right))+\sum_{i\geq n_0}\lambda 2^i (\mu(A_i)+\mu(B_i)) >\|\mu\|.
\end{align*}
For $\lambda<0$, we have
\begin{align*}
\|\mu+ \lambda \nu\|
&= |\mu+ \lambda \nu|([0,1] \setminus (A \cup C))+|\mu+ \lambda \nu|(\cup_{i\in \mathbb{N}} \left(A_i\cup B_i)\right) \\
&\qquad +|\mu+ \lambda \nu|\left(A \setminus (\cup_{i\in \mathbb{N}} (A_i\cup B_i)\right)+|\mu+ \lambda \nu|(C) \\
&\geq |\mu|([0,1] \setminus A ) + (\mu+\lambda\nu)(\cup_{i\in \mathbb{N}} \left(A_i\cup B_i)\right)\\
&\qquad +(1+|\lambda|)\mu\left( A \setminus \left(\cup_{i\in \mathbb{N}} (A_i\cup B_i)\right)\right)+|\lambda|\mu\left( A \setminus \left(\cup_{i\in \mathbb{N}} (A_i\cup B_i)\right)\right)\\
&=|\mu|([0,1])+2|\lambda|\mu\left( A \setminus \left(\cup_{i\in \mathbb{N}} (A_i\cup B_i)\right)\right) >\|\mu\|.
\end{align*}

Secondly, we show that $\mu(f)$ is not orthogonal to $\nu(f)$ in the sense of Birkhoff-James for an arbitrary $f\in M_\mu$ by showing that $\nu(f)\neq 0$. Without loss of generality, we assume that $\mu(f)=\|\mu\|$. It is clear that $\int_{A_i}fd\mu=\mu(A_i)=\mu(B_i)=\int_{B_i}fd\mu$ for every $i \in \N$ and $\int_{A \setminus \left(\cup_{i\in \mathbb{N}} (A_i\cup B_i)\right)} f d\mu=\mu\left( A \setminus \left(\cup_{i\in \mathbb{N}} (A_i\cup B_i)\right)\right)$. Hence, we have that
\begin{align*}
\nu(f)
&=\int_A f d \nu +\int _C f d\nu \\
&= \mu\left( A \setminus \left(\cup_{i\in \mathbb{N}} (A_i\cup B_i)\right)\right)\left(-1+ \frac{1}{\delta-\gamma}\left(\int_{\left(\gamma,\frac{\gamma+\delta}{2}\right)} f d\sigma - \int_{\left(\frac{\gamma+\delta}{2},\delta\right)} f d\sigma\right)\right)\\
&<0.
\end{align*}

The last inequality follows from that fact that $\int_{\left(\gamma,\frac{\gamma+\delta}{2}\right)} f d\sigma - \int_{\left(\frac{\gamma+\delta}{2},\delta\right)} f d\sigma=\delta-\gamma$ implies $f$ is $1$ on $\left(\gamma,\frac{\gamma+\delta}{2}\right)$ and $-1$ on $\left(\frac{\gamma+\delta}{2},\delta\right)$ almost everywhere and it is not possible for continuous functions.
\end{proof}

\begin{prop}\label{example2} A norm attaining regular Borel measure $\mu\in \mathcal{M}[0,1]$ satisfying the following two properties does not have the adjusted $\BS$ property. 
\begin{enumerate}
\setlength\itemsep{0.3em}
\item[\textup{(i)}] there exists an infinite subset $A \subset [0,1]$ such that $\mu(\{t\})\neq 0$ for each $t\in A$.
\item[\textup{(ii)}] there exists an interval $C=(\gamma,\delta) \subset [0,1]\setminus A$ such that $|\mu|(C)=0.$
\end{enumerate}
\end{prop}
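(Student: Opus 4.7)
The plan is to mimic Proposition \ref{example1}, replacing the continuous distribution on the subinterval with a discrete pairing of atoms. Concretely, I construct $\nu = \nu_1 + \nu_2$, where $\nu_1$ is supported on a paired subfamily of atoms from $A$ (chosen to enforce $\mu \perp_S \nu$) and $\nu_2$ cancels the mass of one distinguished atom while redistributing it as a mean-zero density on $C$ (forcing $\nu(f) \ne 0$ for every $f \in M_\mu$). Enumerating $A = \{t_1, t_2, \ldots\}$ (necessarily countable since $\mu$ has only countably many atoms) and using $\sum_k |\mu(\{t_k\})| \le |\mu|([0,1]) < \infty$, the masses tend to zero; we may therefore pick one point $t^* \in A$ with $m^* := |\mu(\{t^*\})|$ and, by thinning, a subsequence $(s_i)_{i \in \mathbb{N}} \subseteq A \setminus \{t^*\}$ with $|\mu(\{s_i\})| \le 4^{-i}$. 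Writing $\epsilon_t := \sgn \mu(\{t\})$, set
\[
\nu_1 := \sum_{i=1}^\infty 2^{-i} \bigl( \epsilon_{s_{2i-1}} \delta_{s_{2i-1}} - \epsilon_{s_{2i}} \delta_{s_{2i}} \bigr), \qquad \nu_2 := -\epsilon_{t^*} m^* \delta_{t^*} + \frac{m^*}{\delta - \gamma}\bigl( \chi_{(\gamma, \frac{\gamma+\delta}{2})} - \chi_{(\frac{\gamma+\delta}{2}, \delta)} \bigr)\, d\sigma.
\]

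For the strong orthogonality $\mu \perp_S \nu$, I decompose $[0,1]$ into the paired atoms $\{s_{2i-1},s_{2i}\}$, the block $\{t^*\} \cup C$, and the remainder where $\nu$ vanishes. On each pair, the triangle inequality yields
\[
\left\lvert |\mu(\{s_{2i-1}\})| + \lambda 2^{-i} \right\rvert + \left\lvert |\mu(\{s_{2i}\})| - \lambda 2^{-i} \right\rvert \ge |\mu(\{s_{2i-1}\})| + |\mu(\{s_{2i}\})|,
\]
with strict inequality whenever $|\lambda| > 2^i \max\bigl(|\mu(\{s_{2i-1}\})|, |\mu(\{s_{2i}\})|\bigr)$; the decay $|\mu(\{s_i\})| \le 4^{-i}$ collapses this threshold to zero, so for every $\lambda \neq 0$ infinitely many pairs contribute strict excess. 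The block on $\{t^*\} \cup C$ contributes exactly $(|1-\lambda| + |\lambda|)\, m^*$, which strictly exceeds $m^*$ once $\lambda \notin [0,1]$. Adding the unaffected $|\mu|$ on the remainder then gives $\|\mu + \lambda \nu\| > \|\mu\|$ for every $\lambda \neq 0$.

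To rule out Birkhoff-James orthogonality of the images, I use that any $f \in M_\mu$ satisfies $f(t) = \epsilon_t$ at every atom $t$ of $\mu$ (Radon-Nikod\'ym at atoms). Consequently each paired term of $\nu_1(f)$ cancels, $\nu_1(f) = 0$, and
\[
\nu(f) = -m^* + \frac{m^*}{\delta - \gamma}\Bigl( \int_\gamma^{(\gamma+\delta)/2} f\, d\sigma - \int_{(\gamma+\delta)/2}^\delta f\, d\sigma \Bigr).
\]
By continuity of $f$ the bracket is strictly less than $\delta - \gamma$ in absolute value (equality would force $f \equiv 1$ on the left half and $f \equiv -1$ on the right half of $(\gamma,\delta)$), so $\nu(f) < 0$ and hence $\mu(f) \not\perp_B \nu(f)$. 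The main subtlety is really only the thinning step: the quantitative decay $|\mu(\{s_i\})| \le 4^{-i}$ is what drives the pair-wise strict-inequality threshold to zero, so that the triangle inequalities alone suffice to produce $\mu \perp_S \nu$.
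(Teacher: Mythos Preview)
Your construction is correct and follows essentially the same blueprint as the paper's proof: pick a distinguished atom $t^*$ (the paper's $z$), thin the remaining atoms into pairs with masses bounded by $4^{-i}$, put opposite $\pm 2^{-i}$ weights on each pair, cancel the mass at $t^*$, and redistribute it as a mean-zero Lebesgue density on $C$. The only cosmetic differences are that you keep track of the signs $\epsilon_t$ explicitly (whereas the paper reduces to the case $\mu>0$ on $A$ by passing to an infinite one-signed subset), and you verify $\mu\perp_S\nu$ via the pairwise triangle inequality $|a+c|+|b-c|\ge a+b$ rather than the paper's explicit case split on $\lambda$. One small imprecision: the claim ``any $f\in M_\mu$ satisfies $f(t)=\epsilon_t$ at every atom'' holds literally only when $\mu(f)=\|\mu\|$; for $\mu(f)=-\|\mu\|$ you get $f(t)=-\epsilon_t$, but then $\nu(f)>0$ by the symmetric computation, so the conclusion $\nu(f)\neq 0$ still holds.
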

\begin{proof}
We only give a proof for the case that there are infinitely many $t\in A$ such that $\mu(t)>0$ since the other case can be proved by taking $-\mu$. We assume that $\mu$ is positive on $A$ by considering the infinite subset for the convenience.

To construct a desired regular Borel measure $\nu$ as in Proposition \ref{example1}, we first pick an element $z\in A$. Since $\|\mu\|<\infty$, there are disjoint subsets $A_1=\{x_i : i\in \mathbb{N}\}$ and $A_2=\{y_i : i\in \mathbb{N}\}$ of $A\setminus\{z\}$ satisfying $\max\{\mu(\{x_i\}),\mu(\{y_i\})\}\leq 1/4^i$ for every $i\in \mathbb{N}$. Define a regular Borel measure $\nu$ as
$$d\nu=\sum_{i=1}^\infty \frac{1}{2^i}d\delta_{x_i}-\sum_{i=1}^\infty \frac{1}{2^i}d\delta_{y_i}-\mu(\{z\})d\delta_z+\frac{\mu(\{z\})}{\delta-\gamma}\left(\chi_{\left(\gamma,\frac{\gamma+\delta}{2}\right)}d\sigma-\chi_{\left(\frac{\gamma+\delta}{2},\delta\right)} d\sigma \right)$$
where $\delta_{\{\cdot\}}$ is the Dirac measure, $\chi_{\{\cdot\}}$ is the characteristic function, and $\sigma$ is the Lebesgue measure.

We claim that $\mu$ is strongly orthogonal to $\nu$ in the sense of Birkhoff-James. Indeed, for $0<\lambda<1$, take a number $n_0\in\mathbb{N}$ so that $\lambda2^{n_0}> 1$. It is obvious that $\lambda \nu(\{x_i\})=-\lambda\nu(\{y_i\})>\max\{\mu(\{x_i\}),\mu(\{y_i\})\}$ for every $i\geq n_0$. Then, we have
\begin{align*}
\|\mu+ \lambda \nu\|
&= |\mu+ \lambda \nu|([0,1] \setminus (A_1\cup A_2 \cup \{z\} \cup C))+|\mu+ \lambda \nu|(A_1\cup A_2) \\
&\qquad +|\mu+ \lambda \nu|\left(\{z\}\right)+|\mu+ \lambda \nu|(C) \\
&\geq |\mu|([0,1] \setminus (A_1\cup A_2 \cup \{z\})) + (\mu+\lambda\nu) \left(\{x_i, y_i : i< n_0\}\right)\\
&\qquad +(\mu+\lambda\nu)\left(\{x_i : i\geq n_0\}\right)-(\mu+\lambda\nu)\left(\{y_i : i\geq n_0\}\right)\\
&\qquad\qquad +(1-\lambda)\mu\left( \{z\}\right)+\lambda\mu\left( \{z\}\right)\\
&= |\mu|([0,1] \setminus \{y_i : i\geq n_0\})+\sum_{i\geq n_0}\left(2\lambda|\nu(\{y_i\})|-\mu(\{y_i\})\right) >\|\mu\|.
\end{align*}
For $\lambda<0$, we have
\begin{align*}
\|\mu+ \lambda \nu\|
&= |\mu+ \lambda \nu|([0,1] \setminus (A_1\cup A_2 \cup \{z\} \cup C))+|\mu+ \lambda \nu|(A_1\cup A_2) \\
&\qquad +|\mu+ \lambda \nu|\left(\{z\}\right)+|\mu+ \lambda \nu|(C) \\
&\geq |\mu|([0,1] \setminus (A_1\cup A_2 \cup \{z\})) + (\mu+\lambda\nu)(A_1\cup A_2)\\
&\qquad +(1+|\lambda|)\left(\{z\}\right)+|\lambda|\mu\left(\{z\}\right)\\
&=|\mu|([0,1])|+2|\lambda||\mu|(\{z\}) >\|\mu\|.
\end{align*}

We now show that $\mu(f)$ is not orthogonal to $\nu(f)$ in the sense of Birkhoff-James for an arbitrary $f\in M_\mu$ by showing that $\nu(f)\neq 0$ as we did in Proposition \ref{example1}. Arguing similarly, we may assume such $f$ has the value $1$ on $A$. Hence, we have that
\begin{align*}
\nu(f)
&=\int_{A_1\cup A_2} f d\nu +\int_{\{z\}} f d\nu+\int _C f d\nu \\
&= \mu\left( \{z\}\right)\left(-1+ \frac{1}{\delta-\gamma}\left(\int_{\left(\gamma,\frac{\gamma+\delta}{2}\right)} f d\sigma - \int_{\left(\frac{\gamma+\delta}{2},\delta\right)} f d\sigma\right)\right) <0.
\end{align*}
\end{proof}

From the Bishop-Phelps theorem \cite{BP}, it is true that the set of norm attaining functionals is norm-dense for arbitrary Banach spaces. Moreover, for a norm attaining functional on $C[0,1]$, it is easy to find another functional which is arbitrarily close to the previous one and satisfies the conditions in Proposition \ref{example1} or \ref{example2}. Hence, the set of norm attaining functionals without the adjusted $\BS$ property is norm-dense. It is worth mentioning that in $\Lin(c_0,\R)$ there is no norm attaining operators without the adjusted $\BS$ property since $\BSa(c_0,\R)=\NA(c_0,\R)$ as in Theorem \ref{thm:ell1predual}.

\begin{cor}
The set of norm attaining operators without the adjusted $\BS$ property is norm-dense in $\Lin(C[0,1],\R)$.
\end{cor}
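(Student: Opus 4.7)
My plan is to combine the Bishop-Phelps theorem with a direct approximation: every norm attaining measure $\mu \in \mathcal{M}[0,1]$ can be approximated in norm by a norm attaining measure $\mu'$ satisfying the hypotheses of Proposition \ref{example1}, which then lacks the adjusted $\BS$ property by that proposition. Combined with the Bishop-Phelps density of $\NA(C[0,1],\R)$, this yields the claim.

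Given a norm attaining $\mu$ and $\varepsilon > 0$, I would pick an open interval $I \subset [0,1]$ of small $|\mu|$-measure and two disjoint closed subintervals $[\alpha,\beta], [\gamma,\delta] \subset I$. Setting $A = (\alpha,\beta)$, $C = (\gamma,\delta)$, define
\[
\mu' := \mu|_{[0,1]\setminus(A \cup C)} + \eta\,\chi_A\,\sigma,
\]
where $\sigma$ is the Lebesgue measure and $\eta > 0$ is small. Then $\|\mu'-\mu\| \leq |\mu|(A\cup C) + \eta(\beta-\alpha) < \varepsilon$ for appropriate choices of $I$ and $\eta$; moreover $\mu'|_A = \eta\chi_A\sigma$ has CDF $F(t) = \eta(t-\alpha)$ on $[\alpha,\beta]$, which is continuous, nondecreasing, and non-constant, and $|\mu'|(C) = 0$; so conditions (i) and (ii) of Proposition \ref{example1} are fulfilled.

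The main obstacle is showing that $\mu'$ remains norm attaining, and I would handle this by a case split. If $\supp(|\mu|) \neq [0,1]$, then $I$ can be chosen disjoint from $\supp(|\mu|)$, so that $|\mu|(A \cup C) = 0$ and $\mu' = \mu + \eta \chi_A \sigma$. The Jordan decomposition of $\mu'$ is then $(\mu')^+ = \mu^+ + \eta\chi_A\sigma$ and $(\mu')^- = \mu^-$, whose supports $\supp(\mu^+) \cup [\alpha,\beta]$ and $\supp(\mu^-)$ are disjoint closed subsets of $[0,1]$ because $[\alpha,\beta] \subset I \subset [0,1] \setminus \supp(|\mu|)$. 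Urysohn's lemma on the compact Hausdorff space $[0,1]$ then yields a continuous $g \in S_{C[0,1]}$ with $g \equiv 1$ on $\supp((\mu')^+)$ and $g \equiv -1$ on $\supp((\mu')^-)$, for which $\mu'(g) = \|\mu'\|$. In the other case $\supp(|\mu|) = [0,1]$, any norming element $f_0$ for $\mu$ satisfies $\supp(\mu^+) \subset \{f_0 = 1\}$ and $\supp(\mu^-) \subset \{f_0 = -1\}$, so $[0,1] = \supp(|\mu|) \subseteq \{f_0 = 1\} \cup \{f_0 = -1\}$; as these are disjoint closed subsets of the connected space $[0,1]$, one of them must be empty and hence $\mu$ has constant sign. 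Assuming $\mu \geq 0$ (the opposite case follows symmetrically by replacing $\eta\chi_A\sigma$ with its negative, which still fulfils (i) via a nonincreasing CDF), the measure $\mu'$ is again nonnegative and is therefore norm attained at the constant function $1$. Once norm attainment of $\mu'$ is confirmed, Proposition \ref{example1} delivers the failure of the adjusted $\BS$ property, which together with Bishop-Phelps establishes the asserted density.
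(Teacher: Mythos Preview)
Your approach matches the paper's: combine Bishop--Phelps with an explicit perturbation that produces a norm attaining measure meeting the hypotheses of Proposition~\ref{example1}, whereas the paper merely asserts such an approximation is ``easy to find'' without giving details. One small point to patch in Case~1: you assert that $\supp(\mu^+)\cup[\alpha,\beta]$ and $\supp(\mu^-)$ are disjoint, but your ``because'' clause only covers $[\alpha,\beta]\cap\supp(\mu^-)=\emptyset$; the missing piece, that $\supp(\mu^+)\cap\supp(\mu^-)=\emptyset$, follows from $\mu$ being norm attaining by exactly the argument you give in Case~2 (any norming $f_0$ forces $\supp(\mu^+)\subset\{f_0=1\}$ and $\supp(\mu^-)\subset\{f_0=-1\}$).
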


On the other hand, using Proposition \ref{example1} and \ref{example2}, we shall see that the denseness of $\BSa(C[0,1],\R)$ does not hold. In the proof, we make use of the following decomposition of a regular Borel measure on the real line $\mathbb{R}$. 

\begin{lemma}[\mbox{\cite[Theorem 19.57, Ch V]{HS}}]\label{lem:HS}
Let $\mu$ be any regular Borel measure on $\mathbb{R}$. Then $\mu$ can be expressed in exactly one way in the form 
\[
\mu = \mu_c + \mu_d, 
\]
where $\mu_c$ is a continuous regular Borel measure and $\mu_d$ is a purely discontinuous measure. 
\end{lemma}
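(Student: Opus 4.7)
The plan is to isolate the set of point masses of $\mu$ and build $\mu_d$ from them, then set $\mu_c := \mu - \mu_d$. Put $A := \{ x \in \mathbb{R} : |\mu|(\{x\}) > 0 \}$, the set of atoms of $\mu$. For each $n \in \mathbb{N}$ and each bounded Borel set $B$, the subset $\{ x \in B : |\mu|(\{x\}) \geq 1/n \}$ has cardinality at most $n \cdot |\mu|(B) < \infty$, since the atoms are disjoint and their masses sum to at most $|\mu|(B)$. Letting $n \to \infty$ and writing $\mathbb{R}$ as a countable union of bounded sets shows that $A$ is countable.

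Next I would define
\[
\mu_d(E) := \sum_{x \in A \cap E} \mu(\{x\}) \qquad \text{for every Borel set } E \subseteq \mathbb{R},
\]
and check that the series is absolutely convergent on each bounded set (dominated by $|\mu|(A \cap E)$), so that $\mu_d$ is a regular signed Borel measure. By construction $\mu_d$ is concentrated on the countable set $A$, so it is purely discontinuous. Setting $\mu_c := \mu - \mu_d$, the identity $\mu_c(\{x\}) = \mu(\{x\}) - \mu_d(\{x\}) = 0$ holds both for $x \in A$ (by definition of $\mu_d$) and for $x \notin A$ (where $\mu(\{x\}) = 0$). Thus $\mu_c$ vanishes on every singleton, hence is continuous, and inherits regularity from $\mu$ and $\mu_d$. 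This proves existence.

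For uniqueness, assume $\mu = \mu_c + \mu_d = \mu_c' + \mu_d'$ with $\mu_c, \mu_c'$ continuous and $\mu_d, \mu_d'$ concentrated on countable sets $D, D'$ respectively. Then $\eta := \mu_c - \mu_c' = \mu_d' - \mu_d$ is simultaneously continuous and concentrated on the countable set $D \cup D'$. Summing $|\eta|(\{x\})$ over $x \in D \cup D'$ yields $|\eta|(D \cup D') = 0$ by continuity, while $\eta$ vanishes on the complement of $D \cup D'$ since both $\mu_d'$ and $\mu_d$ do. Hence $\eta \equiv 0$, which gives $\mu_c = \mu_c'$ and $\mu_d = \mu_d'$.

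The only subtlety, rather than a genuine obstacle, is the passage from positive to signed measures: the atom analysis has to be carried out on $|\mu|$ (using either a Jordan decomposition $\mu = \mu^+ - \mu^-$ or the total variation directly), while $\mu_d$ itself is defined by the actual signed values $\mu(\{x\})$. Once this is set up, the routine verifications that $\mu_d$ is countably additive and regular reduce to the observation that $\mu_d$ is a countable sum of weighted Dirac measures, each of which is trivially a regular Borel measure.
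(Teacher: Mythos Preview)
The paper does not prove this lemma at all; it merely cites it from Hewitt--Stromberg \cite[Theorem~19.57]{HS} and then recalls the definitions of continuous and purely discontinuous measures. Your argument is the standard proof of this classical decomposition and is correct, so there is nothing substantive to compare.
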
 
Here, a continuous measure is a measure whose value is $0$ at every singleton, whereas a purely discontinuous measure is a (possibly infinite) countable linear combination of Dirac measures. Note that the cumulative distribution of a continuous measure is continuous (see \cite[Remark 19.58, Ch V]{HS}).

\begin{theorem}
\label{nondensec}The set $\BSa(C[0,1],\R)$ is not norm-dense in $\Lin(C[0,1],\R)$.
\end{theorem}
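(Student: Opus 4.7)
The plan is to exhibit a ball in $\mathcal{M}[0,1] \cong \Lin(C[0,1],\R)$ disjoint from $\BSa(C[0,1],\R)$. I would take $\mu_0 := \sigma$ (Lebesgue measure) and $\varepsilon_0 := 1/4$, and show that any $\mu$ with $\|\mu-\sigma\|<1/4$ fails to lie in $\BSa$. If $\mu \notin \NA(C[0,1],\R)$ we are done. Otherwise $\mu$ attains its norm, and the Radon--Nikod\'ym identity $f=\sgn(\mu)$ $|\mu|$-a.e.\ together with continuity force $f \equiv 1$ on $\overline{\supp \mu_+}$ and $f \equiv -1$ on $\overline{\supp \mu_-}$ for every $f \in M_\mu$, so these closed sets are disjoint. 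Now use Lemma~\ref{lem:HS} to decompose $\mu = \mu_c + \mu_d$: since $\mu_c-\sigma$ is continuous and $\mu_d$ is purely discontinuous, they are mutually singular and $\|\mu-\sigma\| = \|\mu_c-\sigma\| + \|\mu_d\|$, so $\|\mu_c-\sigma\| < 1/4$, $\mu_c \neq 0$, and $\mu_c([0,1]) > 3/4$.

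Split into two cases. If $\supp \mu = [0,1]$, connectedness of $[0,1]$ forces one of $\overline{\supp \mu_\pm}$ to be empty; closeness to $\sigma \geq 0$ rules out the positive support being empty, so $\mu \geq 0$ and $M_\mu = \{\chi_{[0,1]}\}$. Picking $p \in [0,1]$ with $\mu(\{p\}) = 0$ (possible as atoms are countable), $\mu$ and $\delta_p$ are mutually singular, hence $\|\mu+\lambda\delta_p\| = \|\mu\|+|\lambda|$ and $\mu \perp_S \delta_p$, yet $\delta_p(\chi_{[0,1]}) = 1 \neq 0$, giving $\mu \notin \BSa$. If instead $\supp \mu \subsetneq [0,1]$, pick an open interval $(\gamma,\delta) \subset [0,1] \setminus \supp \mu$ with $|\mu|((\gamma,\delta)) = 0$, supplying condition (ii) of Propositions~\ref{example1} and~\ref{example2}. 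When $\mu_d$ has infinitely many atoms (which necessarily lie in $\supp \mu$, hence outside $(\gamma,\delta)$), Proposition~\ref{example2} applies. When $\mu_d$ has only finitely many atoms, condition (i) of Proposition~\ref{example1} is verified as follows: the Hahn decomposition of $\mu_c$ together with mutual singularity of $\mu_c$ and $\mu_d$ gives $\supp \mu_{c+} \cap \supp \mu_- = \emptyset$, so $\mu_{c+}([0,1] \setminus \supp \mu_-) = \|\mu_{c+}\| > 3/4$. Hence some connected component $J$ of the open set $[0,1] \setminus \supp \mu_-$ carries positive $\mu_c$-mass; choose $t_0 \in J \cap \supp \mu_{c+}$ and a small closed interval $[\alpha,\beta] \subset J$ around $t_0$ disjoint from the finitely many atoms and from $(\gamma,\delta)$, so that $\mu_c([\alpha,\beta]) > 0$. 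Then $F(t) = \mu((\alpha,t))$ is continuous (no atoms in $(\alpha,\beta)$), nondecreasing (since $\mu \geq 0$ on $J$), and nonconstant (since $\mu_c([\alpha,\beta]) > 0$), and Proposition~\ref{example1} yields $\mu \notin \BSa$.

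The principal obstacle is the finite-atom subcase of $\supp \mu \subsetneq [0,1]$ when $\supp \mu_-$ could be a nowhere dense closed set (for instance a small fat Cantor-type set) whose complement in $[0,1]$ breaks into only short components: a naive measure-length estimate of the form $\mu_c(J) \geq \sigma(J) - \|\mu_c - \sigma\|$ would need $|J|>1/4$ and is therefore useless on any single short component. The global bound $\|\mu_{c+}\| > 3/4$, which comes from $\mu_c$'s closeness to $\sigma$ via the Hahn decomposition, circumvents this difficulty by guaranteeing that the \emph{total} $\mu_c$-mass on $[0,1] \setminus \supp \mu_-$ exceeds $3/4$, forcing at least one component of this open set to carry positive $\mu_c$-mass and therefore to support the monotonicity interval $[\alpha,\beta]$ required by Proposition~\ref{example1}.
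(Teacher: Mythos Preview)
Your argument is correct and broadly follows the paper's strategy of combining Lemma~\ref{lem:HS} with Propositions~\ref{example1} and~\ref{example2}, but you choose a different centre for the forbidden ball and this changes the shape of the case analysis. The paper centres at the signed measure $d\mu=\chi_{(0,1/2)}\,d\sigma-\chi_{(1/2,1)}\,d\sigma$, which vanishes at $\pm\chi_{[0,1]}$; consequently any nearby norm-attaining $\nu$ cannot attain at $\pm\chi_{[0,1]}$, and the preimage $f^{-1}((-1,1))$ of a norm-attaining point automatically supplies a nonempty open interval of $|\nu|$-measure zero. There is thus no ``full support'' case in the paper, and after invoking Propositions~\ref{example1} and~\ref{example2} the only surviving possibility is that $\nu$ is a finite combination of Dirac measures, which is ruled out because such a $\nu$ is singular to $\mu$ and hence at distance $\geq\|\mu\|=1$. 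Your choice of the Lebesgue measure forces the extra full-support Case~1, which you dispatch by a short direct argument with a Dirac mass; in exchange, in your finite-atom subcase the bound $\|\mu_c-\sigma\|<1/4$ guarantees $\|\mu_{c+}\|>3/4$, so a monotone interval for Proposition~\ref{example1} always exists and you never need the paper's separate singularity argument to exclude finite Dirac sums. Two small points worth tightening when you write it up: $M_\mu=\{\pm\chi_{[0,1]}\}$ rather than $\{\chi_{[0,1]}\}$ in Case~1 (harmless for the argument), and the claim $\supp\mu_{c+}\cap\supp\mu_-=\emptyset$ is most cleanly justified by the identity $(\mu_c)_+=(\mu_+)_c\leq\mu_+$ together with the disjointness of $\supp\mu_+$ and $\supp\mu_-$ that you already derived from norm attainment.
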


\begin{proof}
Define a regular Borel measure $\mu$ on $[0,1]$ as
$$
d\mu=\chi_{\left(0,\frac{1}{2}\right)}d\sigma-\chi_{\left(\frac{1}{2},1\right)} d\sigma,
$$
where $\chi_{\{\cdot\}}$ is the characteristic function and $\sigma$ is the Lebesgue measure.
It is clear that $\mu$ vanishes at $\pm\chi_{[0,1]}$. Hence, every norm attaining $\nu\in\mathcal{M}[0,1]$ with $\|\mu-\nu\|<1/2$ does not attain its norm at these characteristic functions. Observe that $f^{-1}((-1,1))$ is a non-empty open set whose total variation $|\nu|\bigl(f^{-1}((-1,1))\bigr)$ with respect to $\nu$ is $0$ for any norm attaining point $f\in S_{C[0,1]}$ of $\nu$ (see the proof of \cite[Proposition 3.7]{KL} for details). Moreover, this implies that $|\nu| (f^{-1}(\{1\})) >0$ and $|\nu|(f^{-1}(\{-1\}))>0$, since otherwise $\nu$ attains its norm at $\pm\chi_{[0,1]}$. 

Assume that there exists $\nu \in \BSa (C[0,1], \R)$ such that $\| \mu -\nu \| < 1/2$. Fix $f\in S_{C[0,1]}$ such that $\nu(f)=\|\nu\|$ and let $C \subset f^{-1}((-1/2,1/2)) \subset [0,1]$ be an open interval which satisfies $|\nu|(C)=0$.
According to Lemma \ref{lem:HS}, we can write $\nu^+=\nu_c^++\nu_d^+$ and $\nu^-=\nu_c^-+\nu_d^-$ where $\nu^+$ and $\nu^-$ are restrictions of $\nu$ on $f^{-1}((1/2,\infty))$ and $f^{-1}((-\infty,-1/2))$ respectively and $\nu_c^{\{\cdot\}}$ is the continuous measure and $\nu_d^{\{\cdot\}}$ is the purely discontinuous one of each. We note that $\nu^{+}$ and $-\nu^{-}$ are positive measures.

If one of $\nu_d^{\{\cdot\}}$ is not zero on infinitely many points, we see that $\nu$ does not have the adjusted $\BS$ property from Proposition \ref{example2}, which is a contradiction. Therefore, there are only finitely many discontinuities of $\nu$.

We now suppose that $\nu^{+}_c$ is not $0$, then there is an open interval $A = (\eta_1,\eta_2) \subset f^{-1}((1/2,\infty))$ so that $\nu^{+}_c\bigl((\eta_1,t)\bigr) \geq 0$ for every $t \in A$ and $\nu^{+}_c\bigl((\eta_1,\eta_2)\bigr)>0$. Since there are only finitely many discontinuities of $\nu$, we may assume that $\nu^+$ and $\nu^+_c$ are the same on $A$. This shows that the measure $\nu$ satisfies the conditions of Proposition \ref{example1}, which is a contradiction. Similarly, we deduce the same result for the case that $\nu^{-}_c$ is not $0$.

Consequently, we see that the only candidate of measure $\nu$, with $\| \mu - \nu\| < 1/2$, having the adjusted $\BS$ property is a finite linear combination of Dirac measures. However, we then have $\|\mu-\nu\|=\|\mu\|+\|\nu\|\geq \|\mu\|= 1$ for such $\nu$.
\end{proof}

Even though the set $\BSa(C[0,1],\R)$ is not norm-dense in $\Lin(C[0,1],\R)$, we see that there are a lot of operators with the adjusted $\BS$ property. Indeed, we show that $\BSa(C[0,1],\R)$ is weak-$*$-dense.

\begin{prop}
\label{example3} If a measure $\mu\in \mathcal{M}[0,1]$ is a finite linear combination of Dirac measures, then it has the adjusted $\BS$ property.
\end{prop}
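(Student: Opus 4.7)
The plan is to follow an approach analogous to the proof of Theorem \ref{thm:ell1predual}, exploiting the fact that $\mu$ is supported on a finite set. Write $\mu = \sum_{i=1}^n a_i \delta_{t_i}$ with distinct $t_i \in [0,1]$ and $a_i \neq 0$ (the case $\mu = 0$ is trivial); then $\|\mu\| = \sum_{i=1}^n |a_i|$, and one checks that
\[
M_\mu \;=\; \bigl\{ f \in S_{C[0,1]} : f(t_i) = \sgn(a_i) \text{ for each } i\bigr\} \,\cup\, \bigl\{ f \in S_{C[0,1]} : f(t_i) = -\sgn(a_i) \text{ for each } i\bigr\}.
\]
In particular $\mu \in \NA(C[0,1], \R)$. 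Hence, given $\nu \in \mathcal{M}[0,1]$ with $\mu \perp_S \nu$, it suffices to produce some $f \in M_\mu$ with $\nu(f) = 0$.

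First, decompose $\nu = \sum_{i=1}^n b_i \delta_{t_i} + \nu'$, where $b_i := \nu(\{t_i\})$ and $\nu'(\{t_i\}) = 0$ for each $i$; since the atomic part and the residual $\nu'$ are carried by disjoint sets, the total variation splits as
\[
\|\mu + \lambda \nu\| \;=\; \sum_{i=1}^n |a_i + \lambda b_i| + |\lambda|\, \|\nu'\|.
\]
For $|\lambda|$ small enough that $|\lambda b_i| < |a_i|$ whenever $b_i \neq 0$, the first summand equals $\|\mu\| + \lambda \sum_{i=1}^n b_i \sgn(a_i)$. Inserting small $\lambda$ of each sign into the strong orthogonality condition $\|\mu + \lambda \nu\| > \|\mu\|$ forces
\[
\left| \sum_{i=1}^n b_i \sgn(a_i) \right| < \|\nu'\|,
\]
and in particular $\nu' \neq 0$.

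The task then reduces to producing $f \in C[0,1]$ with $\|f\|_\infty \leq 1$, $f(t_i) = \sgn(a_i)$ for every $i$, and $\int f \, d\nu' = -\sum_{i=1}^n b_i \sgn(a_i)$, for such an $f$ lies in $M_\mu$ and satisfies $\nu(f) = 0$. The set $\Omega$ of admissible $f$ (before the integral condition) is convex, so $\bigl\{ \int g \, d\nu' : g \in \Omega \bigr\}$ is an interval; it suffices to show that this interval contains $(-\|\nu'\|, \|\nu'\|)$. Given $\epsilon > 0$, the hypothesis $\nu'(\{t_i\}) = 0$ combined with regularity allows the choice of pairwise disjoint open neighborhoods $U_i \ni t_i$ with $\sum_i |\nu'|(U_i) < \epsilon$; Lusin's theorem on the compact set $K := [0,1] \setminus \bigcup_i U_i$ then yields a continuous $\psi : K \to [-1, 1]$ with $\int_K \psi \, d\nu' > |\nu'|(K) - \epsilon$. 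Extending $\psi$ continuously across each $\overline{U_i}$ to attain the value $\sgn(a_i)$ at $t_i$ (with $\|\psi\|_\infty \leq 1$) gives an element of $\Omega$ whose $\nu'$-integral is within $O(\epsilon)$ of $+\|\nu'\|$; the mirror construction produces values within $O(\epsilon)$ of $-\|\nu'\|$, so every target in $(-\|\nu'\|, \|\nu'\|)$ is realized.

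The main obstacle is the last interval-realization step: one must verify that the point constraints $f(t_i) = \sgn(a_i)$ do not truncate the range of $f \mapsto \int f \, d\nu'$ below $\pm\|\nu'\|$. This is precisely where the decomposition $\nu = \sum b_i \delta_{t_i} + \nu'$ with $\nu'(\{t_i\}) = 0$ is indispensable, since it forces the constraints to be enforced on $\nu'$-negligible neighborhoods and hence perturbs the integral by only $O(\epsilon)$.
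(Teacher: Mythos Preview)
Your proof is correct and follows essentially the same route as the paper: both establish the key inequality $\bigl|\sum_i \sgn(a_i)\,\nu(\{t_i\})\bigr| < |\nu|\bigl([0,1]\setminus\{t_1,\dots,t_n\}\bigr)$ from $\mu\perp_S\nu$, then construct a norm-attaining $f$ with $\nu(f)=0$ by fixing $f(t_i)=\sgn(a_i)$ on small neighborhoods of the $t_i$ and using the freedom on the complement to hit the required value of $\int f\,d\nu'$. The only cosmetic difference is that the paper uses an explicit one-parameter family $f_\alpha=\sum_k\sgn(a_k)h_k+\alpha hg$ and Urysohn's lemma where you invoke convexity of $\Omega$ and Lusin's theorem, but the underlying intermediate-value argument is identical.
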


\begin{proof} The proof is similar to that of Theorem \ref{thm:ell1predual}, but we give the details for the completeness. Let us write $\mu = \sum_{k=1}^n a_k \delta_{x_k}$ with non-zero $a_k$ and elements $x_k\in [0,1]$ where $\delta_{\{\cdot\}}$ is the Dirac measure. Suppose that $\mu \perp_S \nu$ for some $\nu\in \mathcal{M}[0,1]$. We claim first that
$$
\left| \sum_{k=1}^n \sgn (a_k) \nu(\{x_k\}) \right| < |\nu|([0,1]\setminus \{x_k : 1\leq k\leq n\}).
$$
If it is not true, for $\lam_0\in \mathbb{R}$ such that $0<|\lam_0| < \min_{1 \leq k \leq n} \frac{|a_k|}{\max\{1,|\nu|(\{x_k\})\}}$ and $\sgn \lam_0 = - \sgn(\sum_{k=1}^n \sgn (a_k) \nu(\{x_k\}))$, we have that
\begin{align*}
\|\mu + \lam_0 \nu\| &= \sum_{k=1}^n |a_k + \lam_0 \nu(\{x_k\})| + |\lam_0| |\nu|([0,1]\setminus \{x_k : 1\leq k\leq n\}) \\
&= \sum_{k=1}^n \left| |a_k| + \lam_0 \sgn(a_k) \nu(\{x_k\}) \right| + |\lam_0| |\nu|([0,1]\setminus \{x_k : 1\leq k\leq n\}) \\
&= \sum_{k=1}^n |a_k| - |\lam_0| \left| \sum_{k=1}^n \sgn(a_k) \nu(\{x_k\}) \right| + |\lam_0| |\nu|([0,1]\setminus \{x_k : 1\leq k\leq n\}) \\
&\leq \sum_{k=1}^n |a_k| = \|\mu\|,
\end{align*}
which is a contradiction. 

To find $f\in S_{C[0,1]}$ such that $\mu(f)=\|\mu\|$ and $\nu(f)=0$, we take $\eps>0$ such that
$$
\left| \sum_{k=1}^n \sgn (a_k) \nu(\{x_k\}) \right| +4\eps < |\nu|([0,1]\setminus \{x_k : 1\leq k\leq n\}).
$$
From the regularity of $\nu$ there are a compact $K\subset [0,1]\setminus \{x_k : 1\leq k\leq n\}$ and mutually disjoint open sets $U_k\subset [0,1]\setminus K$ such that $x_k \in U_k$, $\overline{U_k}\subset [0,1]\setminus K$,
$$\sum_{k=1}^n |\nu|(U_k)< \sum_{k=1}^n \left|\nu(\{x_k\}) \right| +\eps \text{~and~}|\nu|(K)> |\nu|([0,1]\setminus \{x_k : 1\leq k\leq n\})-\eps.$$

Using Urysohn's Lemma, we construct a continuous non-negative function $h$ in $S_{C[0,1]}$ such that $h \equiv 1$ on $K$ and $h \equiv 0$ on $\cup_{k=1}^n \overline{U_k}$. Similarly, we take non-negative continuous functions $h_k$ in $S_{C[0,1]}$ such that $h_k \equiv 1$ at $x_k$ and $h_k \equiv 0$ on $[0,1]\setminus U_k$.
Now, consider an element $g\in C[0,1]$ such that $\int_K g d \nu>|\nu|(K)-\eps$, and define $f_\alpha \in M_\mu$ by $f_\alpha =\sum_{k=1}^n \sgn(a_k)h_k + \alpha h g$ for $|\alpha|\leq 1$. It is clear that $\mu(f_\alpha) = \|\mu\|$ for any $|\alpha| \leq 1$. On the other hand, we have that 
\begin{align*}
\nu(f_\alpha)
&=\sum_{k=1}^n \int_{\{x_k\}}f_\alpha d\nu +\sum_{k=1}^n \int_{U_k\setminus \{x_k\}}f_\alpha d\nu +\int_{[0,1]\setminus (\cup_{k=1}^n \overline{U_k})}f_\alpha d\nu\\
&= \sum_{k=1}^n \sgn (a_k) \nu(\{x_k\})+\sum_{k=1}^n \int_{U_k\setminus \{x_k\}} \sgn (a_k) h_k d\nu +\alpha\int_{[0,1]\setminus (\cup_{k=1}^n \overline{U_k})} h g d\nu
\end{align*}
From the inequalities 
$$\left|\sum_{k=1}^n \sgn (a_k) \nu(\{x_k\})+\sum_{k=1}^n \int_{U_k\setminus \{x_k\}} \sgn(a_k) h_k d\nu\right| <\left| \sum_{k=1}^n \sgn (a_k) \nu(\{x_k\}) \right| +\eps$$
and
\begin{align*}
\left|\int_{[0,1]\setminus (\cup_{k=1}^n \overline{U_k})} h g d\nu\right|
&\geq \left|\int_{K} hg d\nu\right|-|\nu|\left([0,1]\setminus \left(K\cup\left(\cup_{k=1}^n \overline{U_k}\right)\right)\right)\\
&>|\nu|(K)-2\eps>|\nu|([0,1]\setminus \{x_k : 1\leq k\leq n\})-3\eps,
\end{align*}
we see that there exists $|\alpha|\leq 1$ so that $\nu (f_\alpha)=0$. This shows that $\mu$ has the adjusted $\BS$ property.
\end{proof}

 It is well known that the set of extreme points of $B_{\mathcal{M}[0,1]}$ is the set of Dirac measures on $[0,1]$ (see \cite[Lemma 3.42]{FHH}). From Proposition \ref{example3}, we see that all the linear combinations of extreme points are contained in the set $\BSa(C[0,1],\R)$, so we have shown the following consequence due to the Krein-Milman theorem.

\begin{cor}\label{cor:C[0,1]}The set $\BSa(C[0,1],\R)$ is weak-$*$-dense in $\Lin(C[0,1],\R)$.
\end{cor}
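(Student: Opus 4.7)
The plan is to combine Proposition \ref{example3} with the Krein-Milman theorem applied to the dual ball $B_{\mathcal{M}[0,1]}$, since (via the Riesz representation) we have identified $\Lin(C[0,1],\R)$ with $\mathcal{M}[0,1]$ and its weak-$*$ topology.

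First I would recall the standard identification of the extreme points: $\operatorname{ext}(B_{\mathcal{M}[0,1]})=\{\theta\,\delta_x : x\in[0,1],\ \theta=\pm 1\}$ (cited as \cite[Lemma 3.42]{FHH}). Since $B_{\mathcal{M}[0,1]}$ is weak-$*$-compact by Banach-Alaoglu, the Krein-Milman theorem gives that $B_{\mathcal{M}[0,1]}$ is the weak-$*$-closed convex hull of these signed Dirac masses. Consequently every element of $B_{\mathcal{M}[0,1]}$ is a weak-$*$-limit of convex combinations of the form $\sum_{k=1}^n \lambda_k \theta_k \delta_{x_k}$ with $\theta_k\in\{\pm 1\}$, $\lambda_k\geq 0$ and $\sum_k \lambda_k\leq 1$; rescaling, every element of $\mathcal{M}[0,1]$ is the weak-$*$-limit of finite real linear combinations of Dirac measures.

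Next I would invoke Proposition \ref{example3}, which asserts that every finite linear combination $\sum_{k=1}^n a_k \delta_{x_k}$ lies in $\BSa(C[0,1],\R)$. Combining the two previous steps, the set of such measures sits inside $\BSa(C[0,1],\R)$ and is weak-$*$-dense in $\mathcal{M}[0,1]=\Lin(C[0,1],\R)$, which yields the conclusion.

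There is essentially no obstacle here beyond citing the correct facts cleanly; the only mild subtlety is the passage from the weak-$*$-density of convex combinations of extreme points inside the unit ball (the Krein-Milman output) to the weak-$*$-density of finite linear combinations of Dirac masses in all of $\mathcal{M}[0,1]$. This follows immediately by homogeneity (multiply by a scalar to reach any fixed norm), so the proof is only a couple of lines once Proposition \ref{example3} is in hand.
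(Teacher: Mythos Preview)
Your proposal is correct and follows exactly the paper's own argument: identify the extreme points of $B_{\mathcal{M}[0,1]}$ as the signed Dirac masses, apply the Krein--Milman theorem to obtain weak-$*$-density of finite linear combinations of Dirac measures, and invoke Proposition~\ref{example3} to conclude these lie in $\BSa(C[0,1],\R)$. The homogeneity remark you make is the only additional detail, and it is routine.
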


\begin{remark}
Note that the argument used in Proposition \ref{example3} (and in Corollary \ref{cor:C[0,1]}) can be applied to a general $C(K)$-space. Thus, we observe that $\BSa (\ell_\infty, \mathbb{R})$ is weak-$*$-dense in $\Lin (\ell_\infty, \mathbb{R})$ since $\ell_\infty = C(\beta \N)$. 
\end{remark} 

Since it is known that $\BS(C[0,1],\R) \subseteq \left\{ T \in \NA(C[0,1],\R) : M_T =\left\{ \pm \chi_{[0,1]}\right\}\right\} \cup \{0\}$ (see \cite{KL}), we have $\BS(C[0,1],\R) \subsetneqq \BSa(C[0,1],\R)$, and moreover, the set $\BS(C[0,1],\R)$ is not weak-$*$-dense in $\Lin(C[0,1],\R)$. 
\begin{prop}The set $\BS(C[0,1],\R)$ is not weak-$*$-dense in $\Lin(C[0,1],\R)$.
\end{prop}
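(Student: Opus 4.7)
The plan is to exhibit an explicit basic weak-$*$ open neighborhood that avoids $\BS(C[0,1],\mathbb{R})$ entirely. The key observation is that the containment
\[
\BS(C[0,1],\mathbb{R}) \subseteq \bigl\{ \mu \in \mathcal{M}[0,1] : M_\mu = \{\pm \chi_{[0,1]}\}\bigr\} \cup \{0\},
\]
recalled immediately before the statement, forces every nonzero $\mu \in \BS(C[0,1],\mathbb{R})$ to satisfy $|\mu([0,1])| = \|\mu\| = |\mu|([0,1])$, and hence to be a \emph{sign-definite} measure, i.e.\ either $\mu \geq 0$ or $\mu \leq 0$ as a regular Borel measure on $[0,1]$.

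Next, I would choose two non-negative continuous functions $f_1, f_2 \in S_{C[0,1]}$ with disjoint supports: concretely, a tent function $f_1$ supported in $[0, 1/3]$ with $f_1(1/6) = 1$, and an analogous $f_2$ supported in $[2/3, 1]$ with $f_2(5/6) = 1$. Setting $\mu_0 := \delta_{1/6} - \delta_{5/6}$, one has $\mu_0(f_1) = 1$ and $\mu_0(f_2) = -1$, and I would then consider the basic weak-$*$ neighborhood of $\mu_0$ given by
\[
U := \left\{ \nu \in \mathcal{M}[0,1] : |\nu(f_1) - 1| < \tfrac{1}{2},\ |\nu(f_2) + 1| < \tfrac{1}{2}\right\}.
\]

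The verification that $U \cap \BS(C[0,1],\mathbb{R}) = \emptyset$ is then immediate from the sign dichotomy above: if $\nu \geq 0$ then $\nu(f_2) \geq 0$ since $f_2 \geq 0$, so $|\nu(f_2) + 1| \geq 1 > 1/2$; symmetrically, if $\nu \leq 0$ then $\nu(f_1) \leq 0$, so $|\nu(f_1) - 1| \geq 1$; and $\nu = 0$ likewise fails both inequalities. Combined with the containment recalled above, this shows that no element of $\BS(C[0,1],\mathbb{R})$ lies in $U$, concluding that the set is not weak-$*$-dense in $\Lin(C[0,1],\mathbb{R})$.

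The argument presents essentially no obstacle: once one reads sign-definiteness out of the containment from \cite{KL}, the only task is to package that obstruction inside a single weak-$*$ basic neighborhood, and the disjointness of the supports of $f_1$ and $f_2$ accomplishes this cleanly by separating the ``positive'' and ``negative'' coordinates of $\mu_0$.
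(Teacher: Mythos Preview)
Your proof is correct. Both your argument and the paper's rest on the same key consequence of the containment from \cite{KL}: every nonzero $\nu \in \BS(C[0,1],\mathbb{R})$ satisfies $|\nu(\chi_{[0,1]})| = \|\nu\|$. You read this as sign-definiteness and separate the positive and negative cases using two disjointly supported nonnegative test functions; the paper instead reads it as a norm bound, using $\chi_{[0,1]}$ directly as one test function (so that $|\nu(\chi_{[0,1]})| < 1/2$ forces $\|\nu\| < 1/2$) and a second function $f$ with $\mu(f) > 1/2$ to exclude the resulting small ball. The two packagings are minor variants of one another; yours is arguably a bit more self-contained since it does not require producing an auxiliary $f$ from a specific center measure, while the paper's version makes the role of $\chi_{[0,1]}$ more visible.
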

\begin{proof}
Let $\mu$ be a measure whose norm is $1$ and vanishes at $\pm\chi_{[0,1]}$ like the one in the beginning of the proof of Theorem \ref{nondensec}. Take $f \in B_{C[0,1]}$ such that $\mu(f)>1/2$.
Observe from \cite[Proposition 3.7]{KL} which is mentioned in the beginning of the present section that $\BS(C[0,1],\R)\cap \left\{\nu\in \mathcal{M}[0,1] : |\nu\left(\chi_{[0,1]}\right)|<1/2\right\}$ is contained in a ball $(1/2)B_{\mathcal{M}[0,1]}$. Hence, the set 
\[
\left\{\nu\in \mathcal{M}[0,1]~:~ |\nu\left(\chi_{[0,1]}\right)|<\frac{1}{2},~\nu(f)>\frac{1}{2}\right\}
\]
is a weak-$*$-open set containing $\mu$ which does not intersect with $\BS(C[0,1],\R)$.
\end{proof}

This notable difference between the set of operators with the $\BS$ property and the set of operators with the adjusted $\BS$ property may lead to the following general question.

\begin{question}
Is $\BSa(X,\R)$ weak-$*$-dense in $\Lin(X,\R)$ for every Banach space $X$?
\end{question}

\subsection*{Statements \& Declarations}

\subsection*{Funding}
The first author was supported by Basic Science Research Program through the National Research Foundation of
Korea(NRF) funded by the Ministry of Education, Science and Technology [NRF-2020R1A2C1A01010377]. The second author was supported by NRF (NRF-2019R1A2C1003857), by POSTECH Basic Science Research Institute Grant (NRF-2021R1A6A1A10042944) and by a KIAS Individual Grant (MG086601) at Korea Institute for Advanced Study. The third author was supported by the National Research Foundation of Korea(NRF) grant funded by the Korea government(MSIT) [NRF-2020R1C1C1A01012267].

\subsection*{Competing Interests} The authors have no relevant financial or non-financial interests to disclose.

\subsection*{Author Contributions} All authors contributed to the whole part of works together such as the study conception, design and writing. All authors read and approved the final manuscript.

\end{document}